\newcommand{\id}{{1\hspace{-1mm}{\rm I}}}
\newcommand{\C}{\mathbb{C}}
\newcommand{\N}{\mathbb{N}}
\newcommand{\Z}{\mathbb{Z}}
\newcommand{\R}{\mathbb{R}}
\newcommand{\T}{\mathbb{T}}
\newcommand{\F}{\mathcal{F}}
\newcommand{\M}{\mathcal{M}}
\newcommand{\B}{\mathcal{B}}
\DeclareMathOperator*{\plim}{\textup{P-lim}}
\DeclareMathOperator{\E}{\mathbb{E}}
\DeclareMathOperator*{\argmin}{\text{argmin}}
\newcommand{\supp}{\textup{supp}}
\newcommand{\Gi}{\mathcal{G}}
\newcommand{\Imag}{\textup{Im}}
\newcommand{\dx}{\:\mathrm{d}}
\newcommand{\dxShort}{\mathrm{d}}
\newcommand{\argument}{\, \cdot \,}
\DeclareMathOperator{\sgn}{sgn}
\DeclareMathOperator{\err}{err}
\newcommand{\one}{\mathbbm{1}}
\theoremstyle{definition}
\newtheorem{rem}{Remark}[section]
\newtheorem{expl}[rem]{Example}
\theoremstyle{plain}
\newtheorem{prop}[rem]{Proposition}
\newtheorem{theo}[rem]{Theorem}
\newtheorem{cor}[rem]{Corollary}
\numberwithin{equation}{section}
\begin{document}

\begin{frontmatter}

\title{A solution to a linear integral equation with an application to statistics of infinitely divisible moving averages.}

\runtitle{An integral equation and statistics of moving averages}
\begin{aug}

\author{\fnms{Jochen} \snm{Gl\"uck}\thanksref{a,e1}\ead[label=e1,mark]{jochen.glueck@alumni.uni-ulm.de}}
\author{\fnms{Stefan} \snm{Roth}\thanksref{a,e2}\ead[label=e2,mark]{stefan.roth@alumni.uni-ulm.de}}
\and
\author{\fnms{Evgeny} \snm{Spodarev}\thanksref{a,e3}\ead[label=e3,mark]{evgeny.spodarev@uni-ulm.de}}
\address[a]{Helmholtzstra\ss e 18, 89075 Ulm. \printead{e1}; \printead{e2}; \printead{e3}}

\runauthor{J. Gl\"uck et al.}

\affiliation{Ulm University}

\end{aug}

\begin{abstract}

For a stationary moving average random field, a non--parametric low frequency estimator of the L\'evy density of its infinitely divisible independently scattered integrator measure is given. The plug--in estimate is based on the solution $w$ of the linear integral equation  $v(x) = \int_{\mathbb{R}^d} g(s) w(h(s)x)ds$, where $g,h:\mathbb{R}^d \rightarrow \R$ are given measurable functions and $v$ is a (weighted) $L^2$-function on $\mathbb{R}$. We investigate conditions for the existence and uniqueness of this solution and give $L^2$--error bounds for the resulting estimates. An application to pure jump moving averages and a simulation study round off the paper.
\end{abstract}

\begin{keyword}
\kwd{linear integral equation}
\kwd{existence of unique solution}
\kwd{Fourier transform on the multiplicative group $\R \setminus \{ 0 \}$}
\kwd{moving average pure jump infinitely divisible random field}
\kwd{L\'{e}vy-Kchintchin representation}
\kwd{L\'{e}vy density, inverse problem}
\kwd{non-parametric low frequency estimation}
\kwd{$L^2$-error bound}
\end{keyword}

\end{frontmatter}

\section{Introduction} \label{sect:Int}

Consider a stationary infinitely divisible independently scattered random measure $\Lambda$ whose L\'{e}vy characteristics
are given by $(a_0,b_0,v_0)$, where $a_0 \in \R$, $b_0 \geq 0$,  and $v_0$ is a L\'{e}vy density. For some (known) $\Lambda$-integrable
function $f:\R^d \to \R$ let $X = \{ X(t); \ t\in \R^d \}$, with 
\begin{equation}\label{eq:spectral_repr}
X(t) = \int_{\R^d} f(t-x) \Lambda (dx), \quad t \in \R^d,
\end{equation} 
be the corresponding infinitely divisible moving average random field with L\'{e}vy characteristics $(a_1,b_1,v_1)$. 

A wide class of spatio-temporal processes with spectral representation~\eqref{eq:spectral_repr} is provided by the models of turbulent
liquid flows (the so-called ambit random fields, where a space time L\'{e}vy process serves as integrator). Ambit fields cover a lot of different processes and fields including Ornstein-Uhlenbeck type and mixed 
moving average random fields (cf.~\cite{BarnNielsSchmiegel04,BarnNiels11,Podol15} ). Such processes are also used to model
the growth rate of tumours, where the spatial component describes the angle between the center of the tumour cell and the nearest point at its 
boundary (cf.~\cite{BarnNielsSchmiegel07},~\cite{jonsdottir2008}). 
Another interesting application of~\eqref{eq:spectral_repr} is given in~\cite{Karcher12}, where the author uses infinitely divisible moving 
average random fields in order to model claims of natural disaster insurance within different postal code areas.

The interplay between the L\'{e}vy densities $v_0$ and $v_1$ is described by the relation
\begin{equation}\label{eq:v_1_v_0_relation}
	v_1(x) = \int\limits_{\supp (f)} \frac{1}{|f(s)|} v_0 \left( \frac{x}{f(s)} \right) \dx s, \quad x \in \R^\times,
\end{equation}
where $\supp (f)$ denotes the support of $f$, and where $\R^\times = \R \setminus \{ 0 \}$. Given $v_0$ and $f$, the function $v_1$ is determined by this relation. Now, consider the reverse situation: 
assume $f$ and $v_1$ to be known; 
is it possible to recover $v_0$ from this equation? If the answer to this question is positive, i.e., the correspondence $v_0 \leftrightarrow v_1$  in $\eqref{eq:v_1_v_0_relation}$ is one--to--one,    a non--parametric estimator of $v_0$ can be obtained from an estimator $v_1$.

\subsection*{Nonparametric estimation of $v_0$}

Our main results, presented in Section~\ref{sec:stochastic-application}, deal with the nonparametric estimation of $v_0$ from low frequency observations $(X(t_1),\dots,X(t_n))$ 
of the moving average random field $X$. These observations are used to construct an estimator for $v_1$, and we will employ functional analytic results about (a generalization of) the integral equation~\eqref{eq:v_1_v_0_relation}, presented in Section~\ref{sec:integral-equation}, to construct an estimator for $v_0$ from that.

Our results extend those in~\cite{ KarRothSpoWalk19} to the more general case where $f$ is not assumed to be a simple function. The reason for this extension is twofold. First, the approach in \cite{ KarRothSpoWalk19} is numerically difficult to apply if the number of possible values of $f$ is large. Second, our present paper uses a completely different analytic methodology (cf. Section \ref{sec:integral-equation}) which is of value on its own and is general enough to solve other inverse problems of the form \eqref{eq:main}.

Let us comment on the state of the art. {In the case $d=1$, estimation} of the L\'evy density $v_0$ of the integrator L\'evy process $\{ L_s\}$ of a moving average process  
$X(t)=\int_{\R} f(t-s) \, d L_s,$ $t\in\R$, is treated in~\cite{BelPanWoern19}. It is assumed there that $\E\, L^2_0<\infty$. This estimate is based on 
the inversion of the Mellin transform of the second derivative of the cumulant of $X(0)$. A uniform error bound as well as  the consistency of 
the estimate are given. 
However, the logarithmic convergence rate {shown in \cite[Corollary 1]{BelPanWoern19}} is rather slow. In the paper \cite{BelomOrlPan19}, a specific parametric choice of the kernel function $f$ (including the exponential kernel $f(x)=e^{-x}$ as a limiting  case) leads to another Fourier-based approach to estimate the L\'evy triplet of $\Lambda$.

We prefer to construct a plug-in estimator for $v_0$ that is based on estimates for $v_1$ -- or rather on estimates for $uv_1$, where $u:\R^\times \to \R^\times$ is a multiplicative function such as, e.g., $u(x) = \lvert x \rvert^\beta$ for some $\beta \in \R$. Here, the function $u$ occurs since many of the estimators for L\'{e}vy densities are based on derivatives of the Fourier transform; see e.g.\ \cite{neumann,gugushvili,comte}.

The usage of a plug-in estimator for $v_0$ that is constructed from an estimator for $uv_1$ has the following advantages:
\begin{enumerate}[(a)]
	\item Conditions 
	on $f$ and $uv_1$ are simple to check for a larger class of models, compare Corollary \ref{cor:simple-function-sufficient-condition} and Examples \ref{example:corr3_6}, \ref{ex:2.11}, \ref{ex:pure_jump_1}, \ref{ex:pure_jump_2};
	\item Universality: for any $d\geq 1$ and any estimator $\widehat{uv_1}$ of $uv_1$ the $L^2$-approximation error in the estimation of $uv_0$ can be quantified in 
	terms of the input error \\ $\E \left\| \widehat{uv_1} - uv_1 \right\|^2_{L^2(\R^\times, |x|^c \dx x)}$
	(under certain regularity assumptions on $uv_1$);
	\item At least in case that $X$ is a pure jump infinitely divisible moving average random field, one can obtain $L^2$-convergence rates of order 
	$\mathcal{O}(n^{-\gamma})$ for $uv_0$, where $\gamma > 0$ is a model depending constant.  
\end{enumerate}
Note that $L^2$-consistent estimates for $uv_1$ in situation (c) are available e.g. if $X$ is either $\phi$-mixing or $m$-dependent (cf.~\cite{ KarRothSpoWalk19}).
We also mention that in general, mixing conditions are tricky to check and it may be helpful to use relationships between the different notions
of mixing. For further details on this topic see e.g. \cite{Bradley93}, \cite{doukhan}. On the other hand, $X$ is $m$-dependent whenever $f$
has a bounded 
support.

The construction of the estimator in~\cite{ KarRothSpoWalk19} mainly relies on the first derivative of the characteristic function of $X(0)$. Such methods are indeed well-established
for L\'{e}vy processes (cf.~\cite{neumann},~\cite{gugushvili}~and~\cite{comte}). The main difference between L\'{e}vy processes and stationary infinitely 
divisible random fields is the absence of independent increments that makes proofs very hard since techniques for the i.i.d.\ random variables case cannot be applied in
most situations.
\begin{rem}
In this paper, we stick to the case of low--frequency observations $(X(t_1),\dots,X(t_n))$ since (unlike in financial applications for $d=1$) this situation is the most common in modern spatial  data sets ($d\ge 2$). However, our estimation approach for $v_0$ in Section \ref{sec:estimate_v0} is plug--in and does not differ between low and high frequency data. The difference appears first on the level of estimation of the L\'evy density $v_1$ as it is seen in  Section \ref{subsec:pure_jump_random_fields}. There, high--frequency estimators of $v_1$ such as those in \cite{comte} can be used as well which will slightly improve their rates of convergence to $v_1$ in Theorem \ref{eq:err1_bound} (compare  \cite[Corollary 4.1]{comte}) and, hence, the final performance of the estimate of $v_0$, cf. Theorem \ref{theo:error-bound-for-estimator}.
\end{rem}

\subsection*{Solution theory of the underlying integral equation}

Our estimation approach is  based on an effective method to solve the integral equation~\eqref{eq:v_1_v_0_relation} for $v_0$. In fact, assuming an estimator for $uv_1$  to be known,  we  actually solve the equation
\begin{equation}\label{eq:integ_equation}
	v(x) = \int_{\supp (h)} g(s) w(h(s)x) \dx s, \quad x \in \R^\times,
\end{equation}
for $w$, where $v = uv_1$, $w = uv_0$, $h = 1/f$ on the support $\supp(f)$ of $f$ (and $h=0$ outside $\supp(f)$), and $g(s) = \frac{u(f(s))}{\lvert f(s)\rvert}$ for $s \in \supp(f)$ (while $g = 0$ outside $\supp(f)$).

In Section~\ref{sec:integral-equation}, we study the solvability of equation~\eqref{eq:integ_equation} (with general measurable functions $g,h:\R^d \to \R$) for $w$ if $v$  belongs to the weighted $L^2$-space $L^2(\R^\times,|x|^c \dx x)$ for a fixed number $c \in \R$. We seek for a solution $w$ in the same space $L^2(\R^\times,|x|^c \dx x)$. Of course, the purpose of the exponent $c$ is to control the desired integrability of the functions $v$ and $w$. 

In order to prove existence and uniqueness of solutions to~\eqref{eq:integ_equation} one needs to analyze the linear operator
\begin{equation*}
	w \to \Gi w = \int_{\supp (h)} g(s) w(h(s) \argument) \dx s
\end{equation*}
on the Hilbert space $L^2(\R^\times, |x|^c\dx x)$. Notice that the operator $\Gi$ does not fit into classical Fredholm theory. For instance, it is not compact unless for very special choices of $h$ and $g$; this follows from Theorem~\ref{theo:similar-to-multiplication-operator} below.

Instead, we are going to use Fourier analysis on the multiplicative group of real numbers to analyze the operator $\Gi$. We will see that, under mild assumptions on $g$ and $h$,
$\Gi$ is unitarily similar to a 
multiplication operator (cf. Theorem~\ref{theo:similar-to-multiplication-operator}); thus, necessary and sufficient conditions for the
(unique) solvability of $v = \Gi w$ with respect to the unknown function $w$ can be characterized by injectivity and surjectivity properties 
of a multiplicator, which are very simple. In case of existence, we also provide a formula for the solution to~\eqref{eq:integ_equation} (cf. Corollary~\ref{cor:criterion-for-solvability}).

\subsection*{Organization of the paper}

In Section~\ref{sec:stochastic-application}, we construct plug-in estimators for 
the L\'{e}vy density $v_0$ from low frequency observations of the moving average random field $X$. We further provide bounds
for the $L^2$-error in case that $v_0$ satisfies some integrability conditions. Finally, we show how this approach can be applied to pure 
jump infinitely divisible moving average random fields $X$, i.e. when the characteristic function of $X$ has no Gaussian component.

Section~\ref{sec:integral-equation} provides the functional analytic background for the analysis in Section~\ref{sec:stochastic-application}. It starts with a brief overview of Fourier transforms on the multiplicative group
$\R^\times = \R \setminus \{ 0 \}$, followed by the solution theory for the integral equation~\eqref{eq:integ_equation}. Here, we provide necessary and sufficient 
conditions for the existence and uniqueness of a solution.

In Section~\ref{sec:simulation_study}, we demonstrate that our estimation approach works well for simulated data in dimension $d=1,2$.


\section{Estimation of the L\'evy density of infinitely divisible moving average random fields} \label{sec:stochastic-application}

In this section, we introduce a general procedure to construct plug-in estimators for the L\'{e}vy density of the integrator random measure $\Lambda$ in~\eqref{eq:spectral_repr}, provided an estimator for the L\'{e}vy density of $X$ is given.

First we recall a few definitions (Section~\ref{subsec:application-notations}) and give a brief overview of infinitely divisible random measures and fields {(Section~\ref{subsec:reminder})}.

\subsection{Notation} \label{subsec:application-notations}

Let   $(S,\mathcal{S},\mu)$ be a measure space. We write $L^2(S,\mu)$ for the
space of all $\mu$-measurable functions such that $\int_S |f(x)|^2 \mu(dx)$
is finite. If $S \subseteq \R$ is Borel measurable,  let $\mathcal{S} = \B(S)$ be the Borel $\sigma$-field on $S$. In case  that $\mu$ has a density $g = \frac{d\mu}{dx}$ w.r.t.\ the Lebesgue measure on $\B(S)$ we use the notation $L^2(S,g(x)dx)$. Moreover,  we shortly write $L^2(S)$ for $g \equiv 1$.

The usual Fourier transform 
on the additive group $\R$ is subsequently denoted by $\F_+$; it is, up to a multiplicative constant, a unitary operator on $L^2(\R,\dxShort x)$. Similarly, the Fourier transform on the multiplicative group $\R^\times$ is, again up to a multiplicative constant, a unitary operator
\begin{equation*}
\F_\times: L^2\left(\R^\times, \frac{\dxShort x}{\lvert x\rvert}\right) \to L^2\left(\R^\times, \frac{\dxShort x}{\lvert x\rvert}\right),
\end{equation*}
(as $\frac{\dxShort x}{\lvert x\rvert}$ is the Haar measure on the multiplicative group $\R^\times$). Since $\F_\times$ is hardly treated in detail in the literature, we sum up some of its properties in Section~\ref{subsec:fourier-transform-on-multiplicative-real-numbers}.

By $\M$ we denote the unitary operator from $L^2(\R^\times, \lvert x\rvert^c \dx x)$ to $L^2\left(\R^\times, \frac{\dxShort x}{\lvert x\rvert}\right)$ that is given by
\begin{equation*}
(\M u)(x) = \lvert x\rvert^{(c+1)/2} \, u(x)
\end{equation*}
for each $u \in L^2(\R^\times, \lvert x\rvert^c \dx x)$. Finally, 
the Sobolev space of order $\alpha \in [0,\infty)$ on $\R$ will subsequently be denoted by $H^\alpha (\R)$, i.e.
\begin{equation*}
	H^\alpha(\R) = H^\alpha(\R,dx) = \left\{ f\in L^2(\R): \ \int_\R |(\F_+ f)(x)|^2 (1+x^2)^{\alpha} dx < \infty \right\}.
\end{equation*}
For each fuction $f:\R^d\to\R$, let $\supp(f) := \{s \in \R^d: \ f(s) \neq 0\}$ denote the support of $f$.

\subsection{Infinitely divisible random measures -- a brief reminder} \label{subsec:reminder}

Throughout, we denote the Borel $\sigma$-field on the $d$-dimensional Euclidean space $\R^d$ by $\mathcal{B}(\R^d)$. The collection of all bounded Borel sets in $\R^d$ will be denoted by $\mathcal{E}_0(\R^d)$. 

Let $\Lambda = \{\Lambda(A); \ A \in \mathcal{E}_0(\R^d)\}$ be an infinitely divisible random measure on some probability space $(\Omega, \mathcal{A}, P)$, i.e.\ a random measure with the following properties:
\begin{enumerate}
	\item[(a)] Let $(E_m)_{m\in \N}$ be a sequence of disjoint sets in $\mathcal{E}_0(\R^d)$. Then the sequence $(\Lambda(E_m))_{m\in \N}$ consists of independent random variables; if, in addition, $\cup_{m=1}^\infty E_m \in \mathcal{E}_0(\R^d)$, then we have $\Lambda(\cup_{m=1}^\infty E_m) =\sum_{m=1}^\infty \Lambda(E_m)$ almost surely.
	\item[(b)] The random variable $\Lambda(A)$ has an infinitely divisible distribution for any choice of $A \in \mathcal{E}_0(\R^d)$.  
\end{enumerate}

For every $A\in \mathcal{E}_0(\R^d)$, let $\varphi_{\Lambda(A)}$ denote the characteristic function of the random variable $\Lambda(A)$. Due to the infinite divisibility of the random variable $\Lambda(A)$, the characteristic function $\varphi_{\Lambda(A)}$ has a L\'{e}vy-Khintchin representation which can, in its most general form, be found in \cite[p.\ 456]{Rajput}. Throughout the rest of the paper we make the additional assumption that the L\'{e}vy-Khintchin representation of $\Lambda(A)$ is of a special form, namely
\begin{equation*} 
	\varphi_{\Lambda(A)}(t) = \exp \left\lbrace \nu_d(A) K(t) \right\rbrace, \quad A \in \mathcal{E}_0(\R^d), 
\end{equation*}
with
\begin{equation}\label{eq:K} 
K(t) = ita_0 - \frac{1}{2} t^2 b_0 + \int\limits_{\R} \left( e^{itx} - 1 - itx \id_{[-1,1]}(x) \right)v_0(x)dx,
\end{equation}
where $\nu_d$ denotes the Lebesgue measure on $\R^d$, $a_0$ and $b_0$ are real numbers with $0 \leq b_0 < \infty$ and $v_0: \R \to \R$ is a L\'{e}vy density, i.e.\ a measurable function which satisfies $\int_{\R} \min \{1,x^2\}v_0(x)dx < \infty$. The triplet $(a_0,b_0,v_0)$ will be referred to as {\it L\'{e}vy characteristic} of $\Lambda$. It uniquely determines the distribution of $\Lambda$.
This particular structure of the characteristic functions $\varphi_{\Lambda(A)}$ means that the random measure $\Lambda$ is stationary with control measure $\lambda: \mathcal{B}(\mathbb{\R}) \rightarrow [0,\infty)$ given by  
\begin{equation*}
	\lambda(A) = \nu_d(A) \left[  |a_0| + b_0 + \int\limits_{\R} \min \{1,x^2\} v_0(x)dx \right] \qquad \text{for all } A \in \mathcal{E}_0(\R^d).
\end{equation*}

Now one can define the stochastic integral with respect to the infinitely divisible random measure $\Lambda$ in the following way: 
\begin{enumerate}
	\item Let $f = \sum_{j=1}^n x_j \id_{A_j}$ be a real simple function on $\R^d$, where $A_j \in \mathcal{E}_0(\R^d)$ are pairwise disjoint. Then for every $A \in \mathcal{B}(\R^d)$ we define
	\begin{equation*}
		\int\limits_{A}f(x)\Lambda(dx) = \sum\limits_{j=1}^n x_j \Lambda(A \cap A_j).
	\end{equation*}
	\item A measurable function $f:(\R^d,\mathcal{B}(\R^d))\rightarrow (\R, \mathcal{B}(\R))$ is said to be $\Lambda$-integrable if there exists a sequence $(f^{(m)})_{m \in \mathbb{N}}$ of simple functions as in (1) such that $f^{(m)} \rightarrow f$ holds $\lambda$-almost everywhere and such that, for each $A \in \mathcal{B}(\R^d)$, the sequence $\left( \int_{A} f^{(m)}(x)\Lambda(dx) \right)_{m \in \mathbb{N}}$ converges in probability as $m \rightarrow \infty$. In this case we set
	\begin{equation*}
		\int\limits_{A} f(x) \Lambda(dx) = \plim\limits_{m\rightarrow \infty} \int\limits_{A} f^{(m)}(x)\Lambda(dx).
	\end{equation*}
\end{enumerate}

A useful characterization for $\Lambda$-integrability of a function $f$ is given in \cite[Theorem 2.7]{Rajput}. Now let $f: \R^d \rightarrow \R$ be $\Lambda$-integrable; then the function $f(t - \cdot)$ is $\Lambda$-integrable for every $t\in \R^d$ as well. We define the moving average random field $X = \{X(t), \ t \in \R^d\}$ by
\begin{equation*}
	X(t) = \int\limits_{\R^d} f(t-x)\Lambda(dx), \quad t \in \R^d.
\end{equation*}
Recall that a random field is called \emph{infinitely divisible} if its finite dimensional distributions are infinitely divisible. The random field $X$ above is (strictly) stationary and infinitely divisible. The characteristic function $\varphi_{X(0)}$ of $X(0)$ is given by
\begin{equation*}
	\varphi_{X(0)}(u) = \exp \left( \int_{\R^d} K(uf(s)) \dx s \right), \quad u \in \R,
\end{equation*}
where $K$ is the function from~\eqref{eq:K}. The argument $\int_{\R^d} K(uf(s)) \dx s$ in the above exponential function can be shown to have a similar structure as $K(t)$; more precisely, we have
\begin{equation*}
	\int_{\R^d} K(uf(s)) \dx s = i u a_1 - \frac{1}{2}u^2 b_1 + \int\limits_{\R} \left(e^{iux}-1-iux\id_{[-1,1]}(x)\right)v_1(x) \dx x 
\end{equation*} 
where $a_1$ and $b_1$ are real numbers with $b_0 \geq 0$ and the function $v_1$ is the L\'{e}vy density of $X(0)$. The triplet $(a_1,b_1,v_1)$ is again referred to as \emph{L\'{e}vy characteristic} (of $X(0)$) and determines the distribution of $X(0)$ uniquely. A simple computation shows that the triplet $(a_1, b_1, v_1)$ is given by the formulas
\begin{align} \label{eq:levy-characterisitic-of-field}
	\begin{split}
		& a_1 = \int\limits_{\mathbb{R}^d}U(f(s)) \dx s, \qquad  b_1 = b_0 \int\limits_{\R^d} f^2(s) \dx s, \\
		& v_1(x) = \int\limits_{\supp(f)}  \frac{1}{|f(s)|}v_0\left( \frac{x}{f(s)} \right) \dx s,
	\end{split}
\end{align} 
where  the function $U$ is defined via
\begin{equation*}
	U(u) = u \left( a_0 + \int_{\mathbb{R}} x \left[ \id_{[-1,1]}(ux)- \id_{[-1,1]}(x) \right] v_0(x)\dx x \right).
\end{equation*}
Note that $\Lambda$-integrability of $f$ immediately implies that $f \in L^1(\R^d) \cap L^2(\R^d)$. Hence, all integrals above are finite.

For details on the theory of infinitely divisible measures and fields we refer the interested reader to \cite{Rajput}.


\subsection{An inverse problem}  \label{subsection:inverse-problem}

Throughout the rest of Section~\ref{sec:stochastic-application}, let 
the random measure $\Lambda = \{\Lambda(A), \ A \in \mathcal{E}_0(\R^d) \}$, 
the function $f: \R^d \rightarrow \R$ and the random field $X$ be given as in Section~\ref{subsec:reminder}. Moreover, 
we fix an exponent $c \geq 0$ 
for the weight of the measure in the Hilbert space $L^2(\R^\times, \lvert x\rvert^c \dx x)$.

In typical applications, one can observe the random field $X$ and, from those observations, compute an estimator $(\hat a_1, \hat b_1, \hat v_1)$ for the L\'{e}vy characteristic $(a_1, b_1, v_1)$ of $X(0)$. If one is interested in the random measure $\Lambda$ one needs to compute an estimator for the L\'{e}vy characteristic $(a_0, b_0, v_0)$, given only the estimator for $(a_1, b_1, v_1)$. Those two triplets are related by the formulas~\eqref{eq:levy-characterisitic-of-field}. Assuming that $f$ is known, those formulas immediately yield a way to compute an estimator $\hat b_0$ from the estimator $\hat b_1$. In order to compute an estimator $\hat v_0$ from the estimator $\hat v_1$ one needs to solve  the integral equation \eqref{eq:v_1_v_0_relation}.
	

Once this is accomplished, it is also not difficult to derive an estimator 
$\hat a_0$ from relations~\eqref{eq:levy-characterisitic-of-field} provided that $\int_{\R} f(s) ds \neq 0$ (cf.~\cite{Karcher12, KarRothSpoWalk19}).  
Hence, the main difficulty is to solve the equation
\eqref{eq:v_1_v_0_relation}
for $v_0$ if $v_1$ is given. Our results from Section~\ref{sec:integral-equation} can be used to discuss whether this equation has a unique solution in $L^2(\R^\times, \lvert x\rvert^c \dx x)$ for given $v_1 \in L^2(\R^\times, \lvert x\rvert^c \dx x)$; they also show us how the solution, provided that it exists, can be computed by using only multiplication operators and Fourier transforms.

Now, it turns out that things are actually a bit more involved than discussed above, for the following reason: given a list of observations of the random field $X$ it is common to estimate the function $x^\beta v_1(x)$ rather than $v_1(x)$ itself, since many of the estimators for L\'{e}vy densities are based on derivatives of the Fourier transform (over the additive group $\R$); see e.g.\ \cite{neumann,gugushvili,comte} where this can be seen in the context of L\'{e}vy processes.

To put this in a more general setting, fix $\beta \in \R$ and let $u: \R^\times \to \R$ be a function which is either given by $u(x) = \lvert x\rvert^\beta$ for all $x \in \R^\times$ or by $u(x) = \sgn(x) \lvert x\rvert^\beta$ for all $x \in \R^\times$. Then $u$ is Borel measurable and multiplicative, i.e.\ we have $u(xy) = u(x)u(y)$ for all $x,y \in \R^\times$. Assuming that the function $uv_1$ is in $L^2(\R^\times, \lvert x\rvert^c \dx x)$ we would like to compute the function $uv_0$ in case that this function is still contained in $L^2(\R^\times, \lvert x\rvert^c \dx x)$. Using the relation of $v_0$ and $v_1$ and the fact that $u$ is multiplicative, we immediately obtain the equation
\begin{equation}\label{eq:main-stochatic-part}
(u v_1)(x) = \int\limits_{\supp(f)} \frac{u(f(s))}{|f(s)|} (u v_0)\left( \frac{x}{f(s)} \right) \dx s.
\end{equation}

This is an integral equation of the type~\eqref{eq:main} that we study in detail in Section~\ref{sec:integral-equation} (where $h = \frac{1}{f} \one_{\supp(f)}$ and $g(s) = \frac{u(f(s))}{|f(s)|}$ for $s \in \supp(f)$). In order to apply our general functional analytic results from from Section~\ref{sec:integral-equation} -- in particular, Theorem~\ref{theo:similar-to-multiplication-operator} and its corollaries -- we need that a specific integrability condition \eqref{eq:integrability-condition} is satisfied. To this end, we have to assume that
\begin{equation}
\label{eq:integrability-condition-stochastic-part}
\int\limits_{\supp(f)} \lvert f(s)\rvert^{\beta + \frac{c-1}{2}} \dx s < \infty.
\end{equation}
This condition is not too restrictive and is satisfied for many functions $f$ with a specific choice of constants $\beta$ and $c$ used in applications (compare Examples  \ref{example:corr3_6}, \ref{ex:2.11}, \ref{ex:pure_jump_1}  and Corollary \ref{cor:simple-function-sufficient-condition}).
Then we can define functions $m_{f,\pm}: \R \to \C$ and $\mu_f: \R^\times \to \C$ by  
\begin{equation}
\label{eq:functions-m_f}
\begin{split}
m_{f,+}(x) & := \int\limits_{\supp(f)} u(f(s)) \, \lvert f(s)\rvert^{(c-1)/2} \, e^{-i x \, \log \lvert f(s)\rvert} \dx s, \\
m_{f,-}(x) & := \int\limits_{\supp(f)} u(f(s)) \, \lvert f(s)\rvert^{(c-1)/2} \, e^{-i x \, \log \lvert f(s)\rvert} \, \sgn f(s) \dx s, \\
\mu_f(y) & :=
\begin{cases}
m_{f,+}(\log\lvert y\rvert) \quad & \text{if } y > 0, \\
m_{f,-}(\log\lvert y\rvert) \quad & \text{if } y < 0.
\end{cases}
\end{split}
\end{equation}

The functions $m_{f,\pm}$ and $\mu_f$ are bounded and continuous. They play a major role in the following Theorem~\ref{theo:solvability-of-stochastic-integral-equation} 
which immediately yields criteria for the existence and uniqueness of solutions to equation~\eqref{eq:main-stochatic-part}. It is a consequence of more general Theorem~\ref{theo:similar-to-multiplication-operator} and its Corollaries~\ref{cor:injectivit-and-surjectivity-of-G} and~\ref{cor:criterion-for-solvability}.

\begin{theo} \label{theo:solvability-of-stochastic-integral-equation}
	Assume that the integrability condition~\eqref{eq:integrability-condition-stochastic-part} is satisfied.
	\begin{enumerate}
		\item[(a)] The solution $uv_0 \in L^2(\R^\times, \lvert x\rvert^c \dx x)$ of the integral equation~\eqref{eq:main-stochatic-part} is unique for all $uv_1 \in L^2(\R^\times, \lvert x\rvert^c \dx x)$ for which it exists iff $m_{f,+} \not= 0$ and $m_{f,-} \not= 0$ almost everywhere on $\R$.
		\item[(b)] The integral equation~\eqref{eq:main-stochatic-part} has a solution $uv_0 \in L^2(\R^\times, \lvert x\rvert^c \dx x)$ for all $uv_1 \in L^2(\R^\times, \lvert x\rvert^c \dx x)$ iff $\inf_{x \in \R} \lvert m_{f,+}(x)\rvert > 0$ and $\inf_{x \in \R} \lvert m_{f,-}(x)\rvert > 0$.
		\item[(c)] Let $\alpha \ge 0$ and assume that, for all $x \in \R$ and a constant $\gamma > 0$,
		\begin{equation*}
			\lvert m_{f,\pm}(x)\rvert \ge \frac{\gamma}{1 + \lvert x\rvert^\alpha}.
		\end{equation*}
		If $uv_1 \in L^2(\R^\times, \lvert x\rvert^c \dx x)$ and if both the functions
		\begin{align*}
			& x \mapsto |x|^{(c+1)/2}(uv_1)(\exp(x)) \\
			\text{and} \quad & x \mapsto |x|^{(c+1)/2}(uv_1)(-\exp(x))
		\end{align*}
		are elements of the Sobolev space ${H^{\alpha}(\R)}$,
		then the equation~\eqref{eq:main-stochatic-part} has a unique solution
		$uv_0 \in L^2(\R^\times, \lvert x\rvert^c \dx x)$.
	\end{enumerate}
\end{theo}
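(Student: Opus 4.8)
The plan is to recognize the right-hand side of~\eqref{eq:main-stochatic-part} as the operator $\Gi$ from Section~\ref{sec:integral-equation} applied to $uv_0$, and then to read off all three assertions from the unitary equivalence of $\Gi$ with a multiplication operator furnished by Theorem~\ref{theo:similar-to-multiplication-operator}. With $h = \frac{1}{f}\one_{\supp(f)}$ and $g(s) = \frac{u(f(s))}{|f(s)|}$, equation~\eqref{eq:main-stochatic-part} reads $uv_1 = \Gi(uv_0)$, so the task reduces to transcribing the solvability criteria for $v = \Gi w$ into the present notation. The first step is therefore to check that the integrability hypothesis~\eqref{eq:integrability-condition} required by Theorem~\ref{theo:similar-to-multiplication-operator} is exactly~\eqref{eq:integrability-condition-stochastic-part}. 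Since $|u(f(s))| = |f(s)|^\beta$, we have $|g(s)| = |f(s)|^{\beta-1}$, and the weight produced by conjugating $\Gi$ with $\M$ is $|h(s)|^{-(c+1)/2} = |f(s)|^{(c+1)/2}$; hence $|g(s)|\,|h(s)|^{-(c+1)/2} = |f(s)|^{\beta + (c-1)/2}$, whose integral over $\supp(f)$ is precisely the quantity assumed finite in~\eqref{eq:integrability-condition-stochastic-part}.

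Next I would apply Theorem~\ref{theo:similar-to-multiplication-operator} to conclude that, after conjugation by $\F_\times \M$, the operator $\Gi$ becomes multiplication by a bounded continuous multiplicator. The central bookkeeping step is to verify that this multiplicator is precisely $\mu_f$ from~\eqref{eq:functions-m_f}. Here the product structure $\R^\times \cong \{\pm 1\}\times \R_{>0}$ is essential: the characters of $\R^\times$ split into an even and an odd family, and evaluating the general multiplicator on these two families produces $m_{f,+}$ (no sign factor) and $m_{f,-}$ (with the extra factor $\sgn f(s)$), respectively. The weight $u(f(s))\,|f(s)|^{(c-1)/2}$ common to both $m_{f,\pm}$ is exactly the factor $g(s)|h(s)|^{-(c+1)/2}$ created by the $\M$-conjugation, while the oscillatory kernel $e^{-ix\log|f(s)|}$ is a character of $\R^\times$ evaluated at the multiplicative translation $h(s)$ after the logarithmic change of variables underlying $\F_\times$.

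With this identification in hand, parts~(a) and~(b) follow immediately from Corollary~\ref{cor:injectivit-and-surjectivity-of-G}: a multiplication operator is injective iff its multiplicator vanishes only on a null set, and it is onto iff the multiplicator is bounded away from zero. Splitting $\mu_f$ into its even and odd parts turns these two criteria into the stated conditions on $m_{f,+}$ and $m_{f,-}$ separately, giving uniqueness-where-it-exists in~(a) and existence-for-all-data in~(b). For part~(c) I would invoke the explicit solution formula of Corollary~\ref{cor:criterion-for-solvability}, whereby the candidate solution is $uv_0 = \M^{-1}\F_\times^{-1}\bigl(\mu_f^{-1}\,\F_\times\M(uv_1)\bigr)$. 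Uniqueness is then free, because the lower bound $|m_{f,\pm}(x)| \ge \gamma/(1+|x|^\alpha)$ forces $m_{f,\pm}\neq 0$ almost everywhere, so part~(a) applies. Existence reduces to showing that $\mu_f^{-1}\,\F_\times\M(uv_1)$ lies in $L^2\bigl(\R^\times,\tfrac{\dxShort x}{|x|}\bigr)$, that is, that the at most polynomial blow-up $(1+|x|^\alpha)$ of $1/|\mu_f|$ is absorbed by the decay of $\F_\times\M(uv_1)$.

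The main obstacle, and the technical heart of the argument, is precisely this last estimate: I must convert the Sobolev hypothesis of~(c) into the required decay of $\F_\times\M(uv_1)$. The mechanism is the identification of $\F_\times$ on each half-line of $\R^\times$ with the ordinary additive transform $\F_+$ under the substitution $y = \pm e^{t}$, which carries the Haar measure $\tfrac{\dxShort x}{|x|}$ to Lebesgue measure $\dxShort t$ and identifies $\M(uv_1)$ on the positive and negative half-lines with exactly the two functions appearing in the Sobolev hypothesis of~(c). Under this correspondence the Sobolev weight $(1+t^2)^{\alpha}$ matches the polynomial growth of $1/|m_{f,\pm}|$, so membership in $H^{\alpha}(\R)$ yields $(1+|\cdot|^{\alpha})\,\F_\times\M(uv_1)\in L^2$, and division by $\mu_f$ keeps the result in $L^2$. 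Reassembling the even and odd parts then gives $uv_0 \in L^2(\R^\times,|x|^c\dx x)$, which completes~(c). I expect the delicate points to be the precise matching of the two half-line transforms with the named functions and the correct treatment of the sign component, rather than any single hard inequality.
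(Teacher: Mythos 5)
Your proposal is correct and follows exactly the route the paper itself takes: it derives Theorem~\ref{theo:solvability-of-stochastic-integral-equation} by specializing $h=\tfrac{1}{f}\one_{\supp(f)}$, $g=\tfrac{u(f)}{|f|}$ in Theorem~\ref{theo:similar-to-multiplication-operator} and Corollaries~\ref{cor:injectivit-and-surjectivity-of-G} and~\ref{cor:criterion-for-solvability}, and your verification that \eqref{eq:integrability-condition} becomes \eqref{eq:integrability-condition-stochastic-part} and that the general multiplicator specializes to $\mu_f$ (with $\sgn h(s)=\sgn f(s)$) is exactly the bookkeeping the paper leaves implicit. You also correctly read the Sobolev hypothesis in part~(c) as concerning $(\M(uv_1))(\pm\exp(\argument))$, i.e.\ with weight $e^{x(c+1)/2}$ rather than the $|x|^{(c+1)/2}$ printed in the theorem statement, which is the intended reading consistent with Corollary~\ref{cor:criterion-for-solvability} and the examples.
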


Note that, while our proof of Theorem~\ref{theo:similar-to-multiplication-operator} is formulated over the complex field (as it employs Fourier analysis), it does not matter in the statement of Theorem~\ref{theo:solvability-of-stochastic-integral-equation} (nor in the statement of Theorem~\ref{theo:similar-to-multiplication-operator}) whether we consider complex-valued or real-valued functions. This is explained in detail in Remark~\ref{rem:real-vs-complex}.

Let us briefly discuss what Theorem~\ref{theo:solvability-of-stochastic-integral-equation} gives us in case that $f$ is a simple function: let $f = \sum_{j=1}^n f_j \one_{\Delta_j}$, where $f_1,\dots,f_n \in \R \setminus \{0\}$ are pairwise distinct numbers and where $\Delta_1,\dots,\Delta_n \in \B(\R)$ are pairwise disjoint sets of finite Lebesgue measure. Then the integrability condition~\eqref{eq:integrability-condition-stochastic-part} is automatically satisfied and the functions $m_{f,+}$ and $m_{f,-}$ take the form
\begin{align} \label{eq:functions-m-for-simple-f}
	\begin{split}
		m_{f,+}(x) & = \sum_{j=1}^n u(f_j) \lvert f_j\rvert^{(c-1)/2} e^{-ix \log \lvert f_j\rvert} \nu_d(\Delta_j), \\
		m_{f,-}(x) & = \sum_{j=1}^n u(f_j) \lvert f_j\rvert^{(c-1)/2} e^{-ix \log \lvert f_j\rvert} \nu_d(\Delta_j) \sgn f_j.
	\end{split}
\end{align}
Hence, Theorem~\ref{theo:solvability-of-stochastic-integral-equation} yields the following corollary which mimics~\cite[Theorem 4.1]{KarRothSpoWalk19}:

\begin{cor}\label{cor:simple-function-sufficient-condition}
	Let $f$ be a simple function as above. Suppose that 
	\begin{equation}\label{eq:if_condition_simple_f}
	\sum_{j=2}^n \left(\frac{\lvert f_j\rvert }{\lvert f_1\rvert} \right)^{\beta + \frac{c-1}{2}} \frac{\nu_d(\Delta_j)}{\nu_d(\Delta_1)} < 1.
	\end{equation}
	Then the integral equation~\eqref{eq:main-stochatic-part} has a unique solution $uv_0 \in L^2(\R^\times, \lvert x\rvert^c \dx x)$ for all $uv_1 \in L^2(\R^\times, \lvert x\rvert^c \dx x)$.
\end{cor}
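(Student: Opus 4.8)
The plan is to deduce the statement directly from parts (a) and (b) of Theorem~\ref{theo:solvability-of-stochastic-integral-equation}. Indeed, if one can show that
\[
\inf_{x \in \R} \lvert m_{f,+}(x)\rvert > 0 \quad \text{and} \quad \inf_{x \in \R} \lvert m_{f,-}(x)\rvert > 0,
\]
then part (b) guarantees that~\eqref{eq:main-stochatic-part} is solvable for every $uv_1 \in L^2(\R^\times, \lvert x\rvert^c \dx x)$, while a strictly positive infimum in particular forces $m_{f,\pm} \neq 0$ everywhere, hence almost everywhere, so that part (a) yields uniqueness. (The integrability condition~\eqref{eq:integrability-condition-stochastic-part} needed to invoke the theorem is automatic here: for a simple $f$ the integral reduces to the finite sum $\sum_{j=1}^n \lvert f_j\rvert^{\beta + (c-1)/2}\nu_d(\Delta_j)$, which is finite.) Thus everything reduces to a uniform lower bound on the two exponential sums in~\eqref{eq:functions-m-for-simple-f}.

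First I would abbreviate $a_j := u(f_j)\,\lvert f_j\rvert^{(c-1)/2}\,\nu_d(\Delta_j)$, so that $m_{f,+}(x) = \sum_{j=1}^n a_j\, e^{-ix\log\lvert f_j\rvert}$ and $m_{f,-}(x) = \sum_{j=1}^n a_j\,\sgn(f_j)\, e^{-ix\log\lvert f_j\rvert}$. Since $u$ is one of $\lvert\argument\rvert^\beta$ or $\sgn(\argument)\lvert\argument\rvert^\beta$, we have $\lvert u(f_j)\rvert = \lvert f_j\rvert^\beta$, whence $\lvert a_j\rvert = \lvert f_j\rvert^{\beta + (c-1)/2}\nu_d(\Delta_j)$; the factors $e^{-ix\log\lvert f_j\rvert}$ and $\sgn(f_j)$ are of modulus one. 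Isolating the $j=1$ term and applying the reverse triangle inequality gives, uniformly in $x \in \R$,
\[
\lvert m_{f,\pm}(x)\rvert \;\ge\; \lvert a_1\rvert - \sum_{j=2}^n \lvert a_j\rvert \;=\; \lvert a_1\rvert\left(1 - \sum_{j=2}^n \frac{\lvert a_j\rvert}{\lvert a_1\rvert}\right).
\]
Substituting the values of $\lvert a_j\rvert$ shows that $\lvert a_j\rvert / \lvert a_1\rvert = (\lvert f_j\rvert/\lvert f_1\rvert)^{\beta + (c-1)/2}\,\nu_d(\Delta_j)/\nu_d(\Delta_1)$, so the bracket equals $1$ minus the sum appearing in hypothesis~\eqref{eq:if_condition_simple_f}. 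By assumption this bracket is a strictly positive constant, and $\lvert a_1\rvert = \lvert f_1\rvert^{\beta + (c-1)/2}\nu_d(\Delta_1) > 0$ (note that $\nu_d(\Delta_1) > 0$ is implicit, as it appears in the denominator of~\eqref{eq:if_condition_simple_f}). Hence the right-hand side is a positive constant independent of $x$, giving the desired uniform lower bounds for both $m_{f,+}$ and $m_{f,-}$.

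I do not expect any serious obstacle: the whole argument rests on the reverse triangle inequality, and the only point requiring a little care is that the lower bound must be \emph{uniform} in $x$ — this is precisely why each frequency contributes only the unimodular factor $e^{-ix\log\lvert f_j\rvert}$, so that the $x$-dependence drops out of the estimate, and why it is essential that $f_1$ be singled out as the dominant term through the normalization in~\eqref{eq:if_condition_simple_f}. With the two infima shown to be positive, the conclusion follows by combining parts (a) and (b) of Theorem~\ref{theo:solvability-of-stochastic-integral-equation} as described above.
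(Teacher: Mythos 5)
Your proof is correct and follows essentially the same route as the paper's: rewrite hypothesis~\eqref{eq:if_condition_simple_f} as $\sum_{j=2}^n \lvert f_j\rvert^{\beta + \frac{c-1}{2}}\nu_d(\Delta_j) < \lvert f_1\rvert^{\beta + \frac{c-1}{2}}\nu_d(\Delta_1)$, apply the reverse triangle inequality to the exponential sums in~\eqref{eq:functions-m-for-simple-f} to obtain $\inf_{x\in\R}\lvert m_{f,\pm}(x)\rvert > 0$, and conclude via Theorem~\ref{theo:solvability-of-stochastic-integral-equation}. Your write-up is merely more explicit than the paper's about the modulus computation $\lvert u(f_j)\rvert = \lvert f_j\rvert^\beta$ and about combining parts (a) and (b).
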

\begin{proof}
	The inequality~\eqref{eq:if_condition_simple_f} is equivalent to 
	\begin{equation*}
		\sum_{j=2}^n \lvert f_j\rvert^{\beta + \frac{c-1}{2}} \nu_d(\Delta_j) < \lvert f_1 \rvert^{\beta + \frac{c-1}{2}} \nu_d(\Delta_1);
	\end{equation*}
	a glance at the formulas for $m_{f,+}$ and $m_{f,-}$ in~\eqref{eq:functions-m-for-simple-f} shows that this implies, by means of the triangle inequality, that $\inf_{x \in \R} \lvert m_{f,\pm}(x)\rvert > 0$.
\end{proof}

The formulas for $m_{f,\pm}$ given in~\eqref{eq:functions-m-for-simple-f} in case that $f$ is a simple function give us the opportunity to construct counterexamples to several naturally arising questions:

\begin{expl}
	The fact that $m_{f,\pm} \not= 0$ almost everywhere on $\R$ does not imply that $\inf_{x} \lvert m_{f,\pm}(x) \rvert > 0$ (i.e.\ uniqueness of solutions for the integral equation~\eqref{eq:integrability-condition-stochastic-part} does not imply existence).
	
	Indeed, set $c = 1$, $\beta = 0$ and $u \equiv 1$ for the sake of simplicity. Let $\Delta_1, \Delta_2 \in \B(\R^d)$ be disjoint sets of Lebesgue measure $1$ and define $f = \one_{\Delta_1} + e \one_{\Delta_2}$. The formulas~\eqref{eq:functions-m-for-simple-f} then yield
	\begin{equation*}
		m_{f,+}(x) = m_{f,-}(x) = 1 + e^{-ix}
	\end{equation*}
	for all $x \in \R$. This function has countable many zeros on the real line, so $m_{f,\pm} \not= 0$ almost everywhere but $\inf_{x \in \R} \lvert m_{f,\pm}(x)\rvert = 0$.
\end{expl}

\begin{expl}
	The inequality~\eqref{eq:if_condition_simple_f} is only a sufficient, but not a necessary condition for the conclusion of the corollary.
	
	Indeed, let $c \in \R$ be arbitrary, let $\beta \neq -(c-1)/2$ and $u(x) = |x|^\beta$. Let $\Delta_1, \Delta_2$ and $\Delta_3$ be three disjoint Borel sets of Lebesgue measure $1$. We define $f = \sum_{k=1}^3 e^{\alpha k}\one_{\Delta_k}$ for a fixed number $\alpha \in \R$ that satisfies the estimates
	\begin{equation}\label{eq:alpha_choice}
	\begin{cases}
	 \frac{ \log \left( \frac{-1 + \sqrt{5}}{2} \right) }{\beta + \frac{c-1}{2}} \leq \alpha \leq  
	\frac{\log \left( \frac{1 + \sqrt{5}}{2} \right)}{\beta + \frac{c-1}{2}} , & \text{if }  \beta > -\frac{c-1}{2}, \\
	\frac{\log \left( \frac{1 + \sqrt{5}}{2} \right)}{\beta + \frac{c-1}{2}}   \leq \alpha \leq \frac{
	\log \left( \frac{-1 + \sqrt{5}}{2} \right)}{\beta + \frac{c-1}{2}} , &  \text{if }  \beta < -\frac{c-1}{2}.
	\end{cases}
	\end{equation}
	Then the inequality~\eqref{eq:if_condition_simple_f} is not fulfilled, no matter how we permute the indices $1$, $2$ and $3$. Indeed, with the notation $z = e^{\alpha(\beta + (c-1)/2)} > 0$ we obtain
	\begin{equation*}
		\sum_{j=2}^3 \left( \frac{ f_j }{ f_1} \right)^{\beta + \frac{c-1}{2}} = \begin{cases}
			z + z^2, & f_1 = e^{\alpha}, \\
			z^{-1} + z, & f_1 = e^{2\alpha}, \\
			z^{-2} + z^{-1}, & f_1 = e^{3\alpha}.
		\end{cases}
	\end{equation*}
	The second of the previous terms exceeds $1$ for any $z > 0$ whereas the first and the last terms are greater or equal to $1$ at the same time if and only if 
	$(-1+\sqrt{5})/2 \leq z \leq (1+\sqrt{5})/2$. Resubsituting $z$ shows that this is equivalent to~\eqref{eq:alpha_choice}. Moreover,
	we obtain from formula~\eqref{eq:functions-m-for-simple-f} that
	\begin{equation*}
		m_{f,+}(x) = m_{f,-}(x) = \sum_{k=1}^3  e^{\alpha (\beta + (c-1)/2)k} e^{-i \alpha k x}
	\end{equation*}
	for all $x \in \R$. A simple calculation yields that the complex polynomial $p(w) =\sum_{k=1}^3 e^{\alpha k (\beta+\frac{c-1}{2})} w^k$ has no zeros on the
	complex unit circle $\{ w \in \C: \ |w|= 1 \}$. Thus, $m_{f,+}(x) = p(e^{-i \alpha x}) \neq 0$ for all $x \in \R$. Since $p$ has at most three different zeros in $\C$, we conclude 
	from the continuity of $x \to p(e^{-i \alpha x})$ that $\inf_{x \in \R} |m_{f,+}(x)| > 0$. Hence, Theorem~\ref{theo:solvability-of-stochastic-integral-equation}(b) shows that our integral equation~\textcolor{blue}{\eqref{eq:main-stochatic-part}} has a unique solution $uv_0 \in L^2(\R^\times, \lvert x\rvert^c \dx x)$ for every $uv_1 \in L^2(\R^\times, \lvert x\rvert^c \dx x)$.
\end{expl}

\subsection{An estimator for the L\'{e}vy density $v_0$}\label{sec:estimate_v0}

We use the same notation as in Section~\ref{subsection:inverse-problem}. 
Assume that the integrability condition~\eqref{eq:integrability-condition-stochastic-part} is satisfied and, given the function $uv_1 \in L^2(\R^\times, \lvert x\rvert^c\dx x)$, we compute the function $uv_0 \in L^2(\R^\times,\lvert x\rvert^c\dx x)$. The relation between the functions $uv_1$ and $uv_0$ is given by the integral equation~\eqref{eq:main-stochatic-part} which can, for short, be written as
$uv_1 = \Gi(uv_0)$;
here, $\Gi$ is given by
\begin{align*}
	\Gi: \; L^2(\R^\times,\lvert x\rvert^c\dx x) & \to L^2(\R^\times,\lvert x\rvert^c\dx x), \\
	w & \mapsto \int\limits_{\supp(h)} \frac{u(f(s))}{|f(s)|} \; w\big(\frac{1}{f(s)} \argument \big) \dx s,
\end{align*}
which is a bounded linear operator by condition~\eqref{eq:integrability-condition-stochastic-part}, compare Proposition~\ref{prop:operator-on-L_2}.

In applications we are only given an estimator $\widehat{uv_1}$ for $uv_1$ which depends on the sample size $n \in \N$. Our {general} solution theory from Section~\ref{subsection:solution-theory} {below} provides us with a way to compute the inverse operator $\Gi^{-1}$ by means of the formula
$$
\Gi^{-1} = \M^{-1}\F_\times^{-1} \big(\frac{1}{\mu_f} \F_\times\M \argument\big)
$$
(provided that $\mu_f\not= 0$ almost everywhere); here we 
recall from Section~\ref{subsec:application-notations} that $\M$ is a Hilbert space isomorphism from $L^2(\R^\times, \lvert x\rvert^c \dx x)$ to $L^2(\R^\times, \frac{\dxShort x}{\lvert x\rvert})$ given by
\begin{equation*}
(\M u)(x) = \lvert x\rvert^{(c+1)/2} \, u(x)
\end{equation*}
for each $u \in L^2(\R^\times, \lvert x\rvert^c \dx x)$ and that $\F_\times: L^2(\R^\times, \frac{\dxShort x}{\lvert x\rvert}) \to L^2(\R^\times, \frac{\dxShort x}{\lvert x\rvert})$
is the Fourier transform on the multiplicative group $\R^\times$ (which is explained in detail in Section~\ref{subsec:fourier-transform-on-multiplicative-real-numbers}).

\begin{rem}
Surjectivity of the operator $\Gi$ from Theorem \ref{theo:solvability-of-stochastic-integral-equation} (b) is not an issue in statistics since the estimator of $uv_1$  based on a finite sample from the field $X$ is usually locally integrable (cf. Section \ref{subsec:pure_jump_random_fields}), and all integral operators in the composition of $\Gi^{-1}$ are numerically approximated by integral sums on bounded domains. On the contrary, the injectivity of $\Gi$ (compare Theorem \ref{theo:solvability-of-stochastic-integral-equation} (a)) is important to ensure that the inverse problem is not ill--posed. If $\Gi$ were not injective, several possible candidates for $uv_0$ would exist, and one would either have to describe them all or justify why our inversion approach picks up one of them.
\end{rem}

{It might thus} seem quite natural to define an estimator $\widehat{uv_0}$ for $uv_0$ by means of the formula $\widehat{uv_0} = \Gi^{-1} \widehat{uv_1}$. Unfortunately, this approach does not work for the following main reason. 
  Even if  $\widehat{uv_1}$ lies within the range of $\Gi$ the inverse $\Gi^{-1}$ is in general not  a continuous operator, so we cannot expect $\Gi^{-1} \widehat{uv_0}$ to converge to $uv_0$ as the sample size $n$ tends to infinity. The point here is that the function $\frac{1}{\mu_f}$ is not necessarily bounded, and hence multiplication by this function does not define a bounded linear operator on $L^2(\R^\times, \frac{\dxShort x}{\lvert x\rvert})$.

We can address this challenge as follows: assume that the function $\mu_f: \R^\times \to \C$ defined in formula~\eqref{eq:functions-m_f} satsifies $\mu_f \not= 0$ almost everywhere. 
Choose an arbitrary sequence $(a_n)_{n \in \N} \subseteq (0,\infty)$ which converges to $0$ as $n \to \infty$. For each $n \in \N$ we use the notation
\begin{equation*}
	\frac{1}{\mu_{f,n}} := \frac{1}{\mu_f} \one_{\{\lvert \mu_f\rvert > a_n\}}.
\end{equation*}
Note that the function $\frac{1}{\mu_{f,n}}$ converges almost everywhere to $\frac{1}{\mu_f}$ as $n \to \infty$. Now we can finally define an estimator $\widehat{uv_0}$ by means of the formula
\begin{equation} \label{eq:our-estimator}
\widehat{uv_0} := \M^{-1}\F_\times^{-1} \big( \frac{1}{\mu_{f,n}} \F_\times \M \widehat{uv_1} \big) \qquad \text{for all } n \in \N;
\end{equation}
we point out that the multiplication operator on $L^2(\R^\times, \frac{\dxShort x}{\lvert x\rvert})$ in this formula is continuous (and everywhere defined) since the function $\frac{1}{\mu_{f,n}}$ is bounded.

\begin{rem}
	A few remarks on the numerical evaluation of formula~\eqref{eq:our-estimator} are in order. Note that the operators $\M$ and $\M^{-1}$ in~\eqref{eq:our-estimator} are simply multiplications with real-valued functions, so they are very easy to evaluate numerically; a similar comment applies for the multiplication with $\frac{1}{\mu_{f,n}}$.	
	
		To evaluate the operator $\F_\times$ (given by the integral representation~\eqref{eq:fourier-transform}) one needs a good numerical implementation of the Fourier transform. If it is available, the inverse transfrom $\F_\times^{-1}$ can also be easily computed since we have
	\begin{align*}
		\F_\times^{-1}\varphi = \frac{1}{4\pi}\F_\times \psi
	\end{align*}
	for $\varphi \in L^2(\R^\times, \frac{\dxShort x}{\lvert x\rvert})$, where $\psi \in L^2(\R^\times, \frac{\dxShort x}{\lvert x\rvert})$ is given by $\psi(x) = \varphi(1/x)$ for $x \in \R^\times$
	(see formula~\eqref{eq:inverse-multiplicative-fourier-transform} in Section~\ref{subsec:fourier-transform-on-multiplicative-real-numbers}).
	
\end{rem}

Now we are going to show that the estimator $\widehat{uv_0}$ converges to $uv_0$ as $n \to \infty$, provided that the null sequence $(a_n)$ is appropriately chosen.

\begin{theo} \label{theo:error-bound-for-estimator}
	Let $\err_0^2(n) := \E \big\lVert  \widehat{uv_0} - uv_0 \big\rVert_{L^2(\R^\times, \lvert x \rvert^c \dx x)}^2$, $\err_1^2(n) := \E \big\lVert  \widehat{uv_1} - uv_1 \big\rVert_{L^2(\R^\times, \lvert x \rvert^c \dx x)}^2$ denote the mean square errors of the estimators $\widehat{uv_0}$, $\widehat{uv_1}$, respectively. Then 
	\begin{align*}
		\err_0(n) \; \le \; \frac{1}{a_n} \err_1(n) + \frac{1}{2\sqrt{\pi}} \left\lVert \one_{\{ \lvert \mu_f\rvert \le a_n \} } \frac{1}{\mu_f} \F_\times\M uv_1 \right\rVert_{L^2(\R^\times,\frac{\dxShort x}{\lvert x\rvert})}
	\end{align*}
	for each $n \in \N$. In particular, the estimator $\widehat{uv_0}$ for $uv_0$ is consistent 
	in quadratic mean, provided that $\widehat{uv_1}$ for $uv_1$ is consistent in quadratic mean and that the convergence of $a_n$ to $0$ is sufficiently slow.
\end{theo}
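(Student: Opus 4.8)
The plan is to pass both the estimator~\eqref{eq:our-estimator} and the target $uv_0$ into the multiplicative Fourier picture, where $\Gi$ acts as multiplication by $\mu_f$, and then split the error into a stochastic and a deterministic part. First I would record that, since $uv_0$ is by assumption the solution of $uv_1 = \Gi(uv_0)$, Theorem~\ref{theo:similar-to-multiplication-operator} gives $\F_\times \M (uv_1) = \mu_f \cdot \F_\times \M (uv_0)$; as $\mu_f \neq 0$ almost everywhere, this yields $\frac{1}{\mu_f}\F_\times\M(uv_1) = \F_\times\M(uv_0)$ a.e., and in particular this function lies in $L^2(\R^\times,\frac{\dxShort x}{\lvert x\rvert})$. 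Combining this with the definition of $\widehat{uv_0}$ and the elementary identity $\frac{1}{\mu_{f,n}}-\frac{1}{\mu_f} = -\one_{\{\lvert\mu_f\rvert\le a_n\}}\frac{1}{\mu_f}$, I would write
\begin{equation*}
\widehat{uv_0} - uv_0 = \M^{-1}\F_\times^{-1}\Big( \tfrac{1}{\mu_{f,n}} \F_\times\M(\widehat{uv_1}-uv_1) \;-\; \one_{\{\lvert\mu_f\rvert\le a_n\}}\tfrac{1}{\mu_f}\F_\times\M(uv_1) \Big).
\end{equation*}

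The second step is pure operator-norm bookkeeping via the triangle inequality applied to the two summands. Here I would use that $\M$ and $\M^{-1}$ are isometric isomorphisms between the two weighted $L^2$-spaces, that $\F_\times$ and $\F_\times^{-1}$ have operator norms $2\sqrt{\pi}$ and $\frac{1}{2\sqrt{\pi}}$ (the normalisation recorded in Section~\ref{subsec:fourier-transform-on-multiplicative-real-numbers}), and that multiplication by $\frac{1}{\mu_{f,n}}$ has operator norm at most $\frac{1}{a_n}$, since $\lvert\frac{1}{\mu_{f,n}}\rvert \le \frac{1}{a_n}$ pointwise. The constants $2\sqrt{\pi}$ and $\frac{1}{2\sqrt{\pi}}$ cancel in the stochastic summand, leaving $\frac{1}{a_n}\lVert \widehat{uv_1}-uv_1\rVert_{L^2(\R^\times,\lvert x\rvert^c\dxShort x)}$, while the deterministic summand is bounded by $\frac{1}{2\sqrt{\pi}}\lVert \one_{\{\lvert\mu_f\rvert\le a_n\}}\frac{1}{\mu_f}\F_\times\M(uv_1)\rVert_{L^2(\R^\times,\frac{\dxShort x}{\lvert x\rvert})}$. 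Since the latter is non-random, taking $L^2(\Omega)$-norms of this pointwise-in-$\omega$ inequality and invoking Minkowski's inequality promotes it to the asserted bound on $\err_0(n)$.

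For consistency I would show that both terms vanish. The deterministic term equals $\frac{1}{2\sqrt{\pi}}\lVert \one_{\{\lvert\mu_f\rvert\le a_n\}}\F_\times\M(uv_0)\rVert$ by the identity from the first step; as $a_n \downarrow 0$ the indicators decrease to $\one_{\{\mu_f=0\}}$, which is null, so the integrand tends to $0$ almost everywhere and is dominated by the fixed $L^2$-function $\lvert\F_\times\M(uv_0)\rvert^2$, whence dominated convergence gives convergence to $0$. The stochastic term $\frac{1}{a_n}\err_1(n)$ tends to $0$ exactly when $a_n\to 0$ slowly relative to the rate $\err_1(n)\to 0$, which is the content of ``sufficiently slow.'' The genuinely delicate point is the interplay between these competing terms, since shrinking $a_n$ reduces the bias but inflates the factor $\frac{1}{a_n}$; the main obstacle is therefore not any single estimate but the need to state the result so that an admissible (data-independent) choice exists — and it does, for any null sequence with $\err_1(n)/a_n\to 0$, which is available as soon as $\err_1(n)\to 0$.
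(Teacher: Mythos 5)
Your argument is correct and follows essentially the same route as the paper: the same decomposition of $\widehat{uv_0}-uv_0$ into a stochastic term with multiplier $\frac{1}{\mu_{f,n}}$ and a deterministic bias term with multiplier $\frac{1}{\mu_{f,n}}-\frac{1}{\mu_f}=-\one_{\{\lvert\mu_f\rvert\le a_n\}}\frac{1}{\mu_f}$, the same operator-norm bookkeeping for $\M$, $\F_\times$ and the truncated multiplier, and the same dominated-convergence argument for consistency. Your explicit observation that $\frac{1}{\mu_f}\F_\times\M(uv_1)=\F_\times\M(uv_0)$ makes the membership of the bias term in $L^2(\R^\times,\frac{\dxShort x}{\lvert x\rvert})$ slightly more transparent than in the paper, but this is a presentational difference only.
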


We point out that $\err_1^2(n)$ in the above error bound can be controlled for certain choices of the estimator $\widehat{uv_1}$ (cf.~\cite{ KarRothSpoWalk19}), which will briefly be discussed 
in Section~\ref{subsec:pure_jump_random_fields}.

\begin{proof}[Proof of Theorem~\ref{theo:error-bound-for-estimator}]
	We have
	\begin{align*}
		\err_0(n) \le & \left( \E \left\lVert \M^{-1}\F_\times^{-1} \big( \frac{1}{\mu_{f,n}} \F_\times \M (\widehat{uv_1} - uv_1) \big) \right\rVert_{L^2(\R^\times, \lvert x \rvert^c \dx x)}^2 \right)^{1/2} \\
		& + \left\lVert \M^{-1} \F_\times^{-1}\left( \big(\frac{1}{\mu_{f,n}} - \frac{1}{\mu_f}\big) \F_\times\M uv_1 \right) \right\rVert_{L^2(\R^\times, \lvert x \rvert^c \dx x)} \\
		\le & \frac{1}{2\sqrt{\pi}} \left( \E \left\lVert \frac{1}{\mu_{f,n}} \F_\times \M (\widehat{uv_1} - uv_1) \right\rVert_{L^2(\R^\times, \frac{\dxShort x}{\lvert x \rvert})}^2 \right)^{1/2} \\
		& + \frac{1}{2\sqrt{\pi}} \left\lVert \big(\frac{1}{\mu_{f,n}} - \frac{1}{\mu_f}\big) \F_\times\M uv_1 \right\rVert_{L^2(\R^\times, \frac{\dxShort x}{\lvert x \rvert})} \\
		\le & \frac{1}{a_n} \err_1(n) + \frac{1}{2\sqrt{\pi}} \left\lVert \one_{\{ \lvert \mu_f\rvert \le a_n \}} \frac{1}{\mu_f} \F_\times\M uv_1 \right\rVert_{L^2(\R^\times,\frac{\dxShort x}{\lvert x\rvert})}.
	\end{align*}
	Since $\frac{1}{\mu_f} \F_\times\M uv_1$ is an element of $L^2(\R^\times,\frac{\dxShort x}{\lvert x\rvert})$ and since $\one_{\{ \lvert \mu_f\rvert \le a_n \}}$ converges to $0$ almost everywhere as $n \to \infty$, we conclude from the dominated convergence theorem that the second summand in the estimate tends to $0$. If the estimator $\widehat{uv_1}$ is consistent in quadratic mean and if the convergence of the sequence $a_n$ to $0$ is sufficiently slow (such that $\frac{1}{a_n}\err_1(n) \to 0$), this implies that $\err_0(n) \to 0$ as $n \to \infty$.
\end{proof}

\begin{rem}
	Note that $\frac{\widehat{uv_0}}{u}$ is not a L\'{e}vy density in general, since it cannot be guaranteed to be
	nonnegative. For this purpose, we provide the alternative estimator $\widetilde{uv_0}$ for $uv_0$ defined by
	\begin{equation}\label{eq:alternative_estimator}
	\widetilde{uv_0}(x) := \begin{cases}
	\widehat{uv_0}(x), & \text{if} \ \frac{\widehat{uv_0}(x)}{u(x)} \geq 0, \\
	0, & \text{otherwise}.
	\end{cases}
	\end{equation}
	It is immediately clear that $\widetilde{uv_0} \in L^2(\R^\times, \lvert x \rvert^c \dx x)$ and  
	$$\widetilde{\err}_0^2(n) := \E \big\lVert  \widetilde{uv_0} - uv_0 \big\rVert_{L^2(\R^\times, \lvert x \rvert^c \dx x)}^2 \leq \err_0^2(n).$$
	Hence, the upper bound for $\err_0(n)$ given in Theorem~\ref{theo:error-bound-for-estimator} is an upper bound for $\widetilde{\err}_0(n)$
	as well.
\end{rem}

Finally, we derive a bound for $\err_0(n)$ (and thus for $\widetilde{\err}_0(n)$ as well) provided that $u v_1$ and the function $\mu_f$ satisfy similar 
conditions as  in Corollary~\ref{cor:criterion-for-solvability}:

\begin{cor} \label{cor:criterion-for-convergence}
	Assume that functions $m_{f,+},m_{f,-}: \R \to \C$ defined in~\eqref{eq:functions-m_f} satisfy the estimate
	\begin{equation}\label{eq:m_f_subbound_condition}
	\lvert m_{f,\pm}(x)\rvert \ge \frac{\gamma}{1 + \lvert x\rvert^{\alpha_1}}
	\end{equation}
	for all $x \in \R$, an exponent $\alpha_1 > 0$ and a constant $\gamma > 0$. If 
	\begin{equation*}
		(\M uv_1)(\exp(\argument)), \ (\M uv_1)(-\exp(\argument)) \in H^{\alpha_2}(\R^\times, \dx x)
	\end{equation*}
	for a real number $\alpha_2 > \alpha_1$, then we have
	\begin{align*}
		\err_0(n) \le \frac{\err_1(n)}{a_n} + C \, a_n^{\frac{\alpha_2}{\alpha_1} - 1}
	\end{align*}
	for all $n \in \N$ and a constant $C \ge 0$.
\end{cor}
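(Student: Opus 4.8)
The plan is to start from the explicit error bound of Theorem~\ref{theo:error-bound-for-estimator}, whose first summand $\frac{1}{a_n}\err_1(n)$ already coincides with the first term of the asserted estimate. Everything therefore reduces to bounding the deterministic ``bias'' term
\[
B(n) := \frac{1}{2\sqrt{\pi}} \left\lVert \one_{\{\lvert\mu_f\rvert \le a_n\}} \frac{1}{\mu_f} \F_\times\M uv_1 \right\rVert_{L^2(\R^\times, \frac{\dxShort x}{\lvert x\rvert})}
\]
by $C\, a_n^{\alpha_2/\alpha_1 - 1}$.

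First I would pass to the additive picture. Writing $\phi_+ := (\M uv_1)(\exp(\argument))$ and $\phi_- := (\M uv_1)(-\exp(\argument))$, the explicit form of $\F_\times$ (Section~\ref{subsec:fourier-transform-on-multiplicative-real-numbers}) identifies the two components of $\F_\times\M uv_1$, read in the coordinate $s = \log\lvert y\rvert$, with the ordinary Fourier transforms $\F_+\phi_\pm$; in the same coordinate the multiplier $\mu_f$ becomes $m_{f,+}(s)$ on the positive and $m_{f,-}(s)$ on the negative component, and the Haar measure becomes $\dxShort s$. Hence, up to a fixed normalising constant $c_0$,
\[
B(n)^2 = c_0 \sum_{\pm} \int_{\{\lvert m_{f,\pm}(s)\rvert \le a_n\}} \frac{\lvert (\F_+\phi_\pm)(s)\rvert^2}{\lvert m_{f,\pm}(s)\rvert^2} \dx s .
\]

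Then I would exploit the lower bound~\eqref{eq:m_f_subbound_condition} twice. On one hand it gives the pointwise estimate $\frac{1}{\lvert m_{f,\pm}(s)\rvert^2} \le \gamma^{-2}(1+\lvert s\rvert^{\alpha_1})^2$. On the other hand, on the sublevel set $\{\lvert m_{f,\pm}\rvert \le a_n\}$ it forces $\frac{\gamma}{1+\lvert s\rvert^{\alpha_1}} \le a_n$, so that this set is contained in $\{\lvert s\rvert \ge R_n\}$ with $R_n = (\gamma/a_n - 1)^{1/\alpha_1}$, valid for all $n$ with $a_n < \gamma$. Combining these with $(1+\lvert s\rvert^{\alpha_1})^2 \le C_1 (1+s^2)^{\alpha_1}$ and the elementary bound $(1+s^2)^{\alpha_1} \le (1+R_n^2)^{\alpha_1-\alpha_2}(1+s^2)^{\alpha_2}$ valid for $\lvert s\rvert \ge R_n$ (where $\alpha_1 - \alpha_2 < 0$ is crucial), I obtain
\[
B(n)^2 \le C_2 (1+R_n^2)^{\alpha_1-\alpha_2} \sum_{\pm} \int_\R \lvert(\F_+\phi_\pm)(s)\rvert^2 (1+s^2)^{\alpha_2} \dx s = C_3 (1+R_n^2)^{-(\alpha_2-\alpha_1)},
\]
the last equality being the definition of the (finite) $H^{\alpha_2}(\R)$-norms of $\phi_\pm$.

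Finally I would insert $R_n^2 \sim \gamma^{2/\alpha_1} a_n^{-2/\alpha_1}$ as $a_n \downarrow 0$, which yields $(1+R_n^2)^{-(\alpha_2-\alpha_1)} \le C_4\, a_n^{2(\alpha_2-\alpha_1)/\alpha_1}$; taking square roots gives $B(n) \le C\, a_n^{\alpha_2/\alpha_1 - 1}$ for all large $n$, while the finitely many remaining $n$ are absorbed by enlarging $C$ (the full integral without the indicator is already finite since $\phi_\pm \in H^{\alpha_2}(\R) \subseteq H^{\alpha_1}(\R)$). Adding the two summands reproduces the claimed bound. The main obstacle is the second step: carrying the coordinate change through $\F_\times$ precisely enough that $\mu_f$ really acts as the frequency multiplier $m_{f,\pm}(s)$, and then balancing the polynomial weight $(1+\lvert s\rvert^{\alpha_1})^2$ against the available Sobolev weight $(1+s^2)^{\alpha_2}$ on the tail $\{\lvert s\rvert\ge R_n\}$ so as to extract exactly the exponent $\alpha_2/\alpha_1 - 1$; every other step is a routine estimate.
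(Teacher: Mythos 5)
Your argument is correct, and it rests on the same two pillars as the paper's proof -- the error decomposition of Theorem~\ref{theo:error-bound-for-estimator} and the Sobolev/Fourier-weight equivalence of Proposition~\ref{prop:fourier-and-sobolev} -- but it extracts the rate $a_n^{\alpha_2/\alpha_1-1}$ by a different mechanism. The paper never localizes the sublevel set: it simply factors $\tfrac{1}{\lvert\mu_f\rvert} = \lvert\mu_f\rvert^{\frac{\alpha_2}{\alpha_1}-1}\cdot\lvert\mu_f\rvert^{-\alpha_2/\alpha_1}$, bounds the first factor by $a_n^{\frac{\alpha_2}{\alpha_1}-1}$ on $\{\lvert\mu_f\rvert\le a_n\}$, and checks once and for all that $\lvert\mu_f\rvert^{-\alpha_2/\alpha_1}\,\F_\times\M uv_1 \in L^2(\R^\times,\tfrac{\dxShort x}{\lvert x\rvert})$ by raising the lower bound \eqref{eq:m_f_subbound_condition} to the power $\alpha_2/\alpha_1$ and invoking Proposition~\ref{prop:fourier-and-sobolev}. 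You instead observe that $\{\lvert m_{f,\pm}\rvert\le a_n\}$ is contained in a frequency tail $\{\lvert s\rvert\ge R_n\}$ and trade the weight $(1+s^2)^{\alpha_1}$ against the available $(1+s^2)^{\alpha_2}$ there. Both routes perform the same interpolation between the exponents $\alpha_1$ and $\alpha_2$; the paper's factorization is shorter, stays entirely in the multiplicative picture, and gives a bound valid uniformly in $n$ without your case split for $a_n\ge\gamma$ (which you correctly absorb into $C$, so this is cosmetic). Two small imprecisions in your write-up, neither fatal: by formula~\eqref{eq:fourier-transform} the positive and negative components of $\F_\times\M uv_1$ in the coordinate $s=\log\lvert y\rvert$ are the additive Fourier transforms of $\phi_++\phi_-$ and $\phi_+-\phi_-$ respectively, not of $\phi_\pm$ individually -- your final Sobolev bound survives since $\phi_+\pm\phi_-\in H^{\alpha_2}(\R)$ whenever $\phi_\pm$ are; and you should note, as you do in passing, that the statement only needs $\alpha_2>\alpha_1>0$ so that $\alpha_1-\alpha_2<0$ makes the tail weight $(1+s^2)^{\alpha_1-\alpha_2}$ decreasing.
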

\begin{proof}
	Using the assumption~\eqref{eq:m_f_subbound_condition}, we can find a number $D \ge 0$ such that 
	\begin{equation*}
		\frac{1}{\lvert \mu_f(x)\rvert^{\alpha_2/\alpha_1}} \le D\big(1 + \left\lvert\log\lvert x\rvert \right\rvert^{\alpha_2}\big)
	\end{equation*}
	for all $x \in \R^{\times}$. On the other hand, it follows from the assumptions on $\M uv_1$ and Proposition~\ref{prop:fourier-and-sobolev} that the function $x \mapsto \big(1 + \left\lvert\log\lvert x\rvert \right\rvert^{\alpha_2}\big) (\F_\times\M uv_1)(x)$ belongs to $L^2(\R^\times,\frac{\dxShort x}{\lvert x\rvert})$. Hence, we have
	\begin{equation*}
		\frac{\F_\times\M uv_1}{\lvert \mu_f\rvert^{\alpha_2/\alpha_1}} \in L^2(\R^\times,\frac{\dxShort x}{\lvert x\rvert}).
	\end{equation*}
	Let us denote the $L^2(\R^\times,\frac{\dxShort x}{\lvert x\rvert})$--norm of the latter function by $L$. Then it follows from Theorem~\ref{theo:error-bound-for-estimator} that
	\begin{align*}
		\err_0(n) \le & \frac{\err_1(n)}{a_n} + \frac{1}{2\sqrt{\pi}} \left\lVert \lvert \mu_f \rvert^{\frac{\alpha_2}{\alpha_1} - 1}\, \one_{\{ \lvert \mu_f\rvert \le a_n \} } \frac{1}{\lvert\mu_f\rvert^{\alpha_2/\alpha_1}} \F_\times\M uv_1 \right\rVert_{L^2(\R^\times,\frac{\dxShort x}{\lvert x\rvert})} \\
		\le & \frac{\err_1(n)}{a_n} +  \frac{L}{2\sqrt{\pi}} a_n^{\frac{\alpha_2}{\alpha_1} - 1}.
	\end{align*}
	This proves the corollary.
\end{proof}

\begin{rem}\label{rem:criterion-for-convergence}
	(a) Note that, in applications, one can try to compute the number $\alpha_1$ in the above corollary explicitly (in case that such a number exists) since the function $f$ -- and hence the functions $m_{f,\pm}$ -- are part of the model and thus known (cf. Example \ref{example:corr3_6} below).
	
	(b) In most cases, one will not know the exact mean quadratic error $\err_1(n)$ for the estimator $\widehat{uv_1}$ but only an upper bound $e_n$ for $\err_1(n)$. Of course, one has to choose $a_n$ such that $\frac{e_n}{a_n} \to 0$ to ensure convergence; cf.~Theorem~\ref{theo:error-bound-for-estimator}. If the number $\alpha_2$ were known, too (in case that such a number exists), one could optimize the choice of $a_n$ in order to obtain an optimal decay rate for the error bound for $\err_0(n)$ in Corollary~\ref{cor:criterion-for-convergence}.
	
	Indeed, it is not difficult to see that the optimal choice of $a_n$ is $a_n := \mathcal{O} (e_n^{\alpha_1/\alpha_2})$, which yields the decay rate $\mathcal{O} \Big(e_n^{1 - \frac{\alpha_1}{\alpha_2}}\Big)$ for the error bound of $\err_0(n)$. Unfortunately though, $\alpha_2$ will almost never be known. In fact, since $uv_1$ is unknown, we cannot even be sure whether a number $\alpha_2$ with the wanted properties exists. However, assuming its existence, one can simply choose $a_n := e_n^{1/2}$; this ensures that $\err_0(n)$ decays to $0$ (cf.~Theorem~\ref{theo:error-bound-for-estimator}) and it yields the decay estimate
	\begin{equation*}
		\err_0(n) \le e_n^{1/2} + C \, e_n^{(\frac{\alpha_2}{\alpha_1} - 1)/2}.
	\end{equation*}
\end{rem}

\begin{expl}\label{example:corr3_6}
	As mentioned in Remark~\ref{rem:criterion-for-convergence}, the function $f$ is assumed to be a part of the model 
	and thus, in some cases it is possible to compute $\alpha_1$ and $\gamma$ in Corollary~\ref{cor:criterion-for-convergence}.
	Let us give some examples of functions $f$ with $\alpha_1 = 1$:
	\begin{enumerate}[(a)]
		\item $\R \ni s \to f(s) = e^{-\theta |s|}$ for some $\theta > 0$; then 
		$$\gamma = \frac{2}{\theta} \left( 1 + \left( \beta + \frac{c-1}{2} \right)^2 \right)^{-1/2},$$
		provided that $\beta + \frac{c-1}{2} > 0$.
		\item $\R^2 \ni s \to f(s) = \tau \left( \kappa^2 - \left\| s \right\|_2^2  \right) \one_{(0,\kappa)}(\left\| s \right\|_2)$ for some
		$\tau,\kappa > 0$, where $\left\| \cdot \right\|_2$ denotes the Euclidean norm. Then
		$$\gamma = \pi \kappa^{2\beta+1+c} \tau^{\beta+(c-1)/2} \left( 1 + \left( 1 + \beta + \frac{c-1}{2} \right)^2  \right)^{-1/2},$$
		if it is assumed that $1 + \beta + \frac{c-1}{2} > 0$.
		\item $\R \ni s \to f(s) = \left( 1 + |s| \right)^{-\theta}$ for some $\theta > 0$; then
		$$\gamma =\frac{2}{\theta} \left( 1 + \left( \frac{1}{\theta} - \beta - \frac{c-1}{2} \right)^2 \right)^{-1/2},$$ 
		in case that $\beta + \frac{c-1}{2} > \frac{1}{\theta}$.
	\end{enumerate} 
	In all three cases, condition \eqref{eq:integrability-condition-stochastic-part} is evidently satisfied. The proofs are straightforward and we therefore omit them here. 
\end{expl}

\begin{expl}\label{ex:2.11}
	Let $D: \R_+ \to [0,1]$ be the distribution function of the gamma distribution $\Gamma\left(p, b+\frac{1}{2}\right)$ with  $p>0$,
	$b \ge 1$, i.e., let
	\begin{equation*}
		D(t) = \frac{(2b+1)^p}{2^p\Gamma (p)} \int_0^t s^{p-1} e^{-\big( b+\frac{1}{2} \big)s}ds, \quad t \geq 0.
	\end{equation*}
	Let $c=0$, $u(x)=x$. Then $f:[0,1) \to [1,\infty)$, given by the inverse of $t \mapsto D(\log t)$, satisfies condition \eqref{eq:integrability-condition-stochastic-part} with $\beta=c=1$ and condition~\eqref{eq:m_f_subbound_condition}
	with $\alpha_1 = p$. Indeed, $\int_0^1 f(t) dt=\E e^Y<\infty$, where a random variable $Y$ has distribution function $D$. Furthermore, a simple integral substitution yields that
	\begin{equation*}
		\begin{split}
			m_{f,+}(x) = & \frac{(2b+1)^p}{2^p\Gamma (p)} \int_0^\infty e^{-ixt} e^{\frac{1}{2}t} \frac{\dx}{\dx t}
			(f^{-1}(e^t)) \dx t.
		\end{split}
	\end{equation*}
	Taking into account that $f^{-1}(e^t) = D(t)$, the latter integral is a constant multiple of the characteristic function 
	of the Gamma distribution $\Gamma\left(p, b\right)$; hence $|m_{f+}(x)| \geq \gamma (1+|x|)^{-p}$ for all $x \in \R$ and 
	some $\gamma > 0$. 
\end{expl}

\subsection{Application to pure jump random fields}\label{subsec:pure_jump_random_fields}

We finally apply the above results to pure jump random fields. Let $u(x) = x$ and suppose
that the stationary infinitely divisible random field $X(t) = \int_{\R^d} f(t-x)\Lambda(dx), \ t\in \R^d$ has 
characteristic function $\varphi_{X(0)}$ given by
$$ \psi(y) := \varphi_{X(0)}(y) = \mathbb{E} e^{iyX(0)} = \exp \left\lbrace
\int\limits_{\R} \left( e^{iyx}  - 1 \right)v_1(x)dx  \right\rbrace, \quad y \in \R  . $$
Note that the numbers $a_0$, $b_0$ in the L\'{e}vy characteristics of $\Lambda$ are given by $a_0 = \int_{-1}^1 x v_0(x) dx$ and $b_0 = 0$ here.
Under the additional assumption $\int_{\R} |x| v_1(x) dx < \infty$ we have
$$ \psi^\prime (y) = i \psi(y) \int\limits_{\R} e^{iyx} x v_1(x)dx = i \psi(y) \F_+[uv_1](y), $$
which is equivalent to 
\begin{equation}\label{FourierRelation}
\F_+[uv_1](y) = -i \frac{\psi^\prime(y)}{\psi(y)}, 
\end{equation}
where $\F_+$ denotes the usual Fourier transform on the additive group $\R$.  
Now let $X$ be observed on 
a regular grid $\Delta \mathbb{Z}^d$ with mesh size $\Delta > 0$ (low frequency observations), i.e. consider the random field
$Y = \{Y_j ,\, j \in \Z^d\},$ where
\begin{equation}\label{eq:rfY}
	Y_j = X(\Delta j), \quad \Delta j = (\Delta j_1, \dots, \Delta j_d), \quad
	j = (j_1,\dots,j_d) \in \mathbb{Z}^d.
\end{equation}
For a finite nonempty set $W \subset \mathbb{Z}^d$
with cardinality $n = |W|$ let $(Y_{j})_{j \in W}$ be a sample drawn from $Y$. 
Based on relation~\eqref{FourierRelation}, an estimator for the Fourier
transform of $uv_1$ can be deduced by taking the empirical counterparts
\begin{equation*}
	\hat{\psi}(y)  = \frac{1}{n} \sum_{j \in W} e^{i y Y_j}, \quad \hat{\theta}(y) = \frac{1}{n} \sum_{j \in W}Y_j e^{i y Y_j}
\end{equation*}
of $\psi$ and $-i \psi^\prime$. In order to stabilize the estimator for small values of $\hat{\psi}$ we set
\begin{equation*}
	\widehat{\F_+[uv_1]}(y) =  \frac{\hat{\theta}(y)}{\tilde{\psi}(y)},
\end{equation*}
where
\begin{equation*}
	\frac{1}{\tilde{\psi}(y)} := \frac{1}{\hat{\psi}(y)}\id \{{|\hat{\psi}(y)| > n^{-1/2}}\}.
\end{equation*}
A natural idea now is to define an estimator $\widehat{uv_1}$ for $uv_1$ by taking the inverse Fourier transform
of $\widehat{\F_+[uv_1]}$. This may fail since in general $\widehat{\F_+[uv_1]}$ is not integrable. Nevertheless, it
is locally integrable, and thus we can define
\begin{equation}\label{eq:uv_1_estimator_pure_jump}
\widehat{uv_1}(x) = \F_+^{-1} \left[ \widehat{\F_+[uv_1]} \one_{[-\pi l, \pi l]} \right](x)
= \frac{1}{2\pi} \int\limits_{[-\pi l, \pi l]} e^{-iyx} \widehat{\F_+[uv_1]}(y)dy,
\end{equation}
for some $l > 0$. This estimator was originally designed by Comte and Genon-Catalot \cite{comte1} in case that $X$ is a L\'{e}vy process. Other direct estimation approaches for $v_1$ can be found in the  book \cite{LevMattersIV}.

The following theorem, which is due to \cite[Corollary 5.2]{ KarRothSpoWalk19}, gives an upper bound for the mean squared error $\err_1(n)$. 
\begin{theo}\label{eq:err1_bound}
	Let $Y=\{Y_j; \ j \in \Z^d \}$ be the random field introduced in~\eqref{eq:rfY}. Suppose $uv_1 \in H^{a}(\R,dx) \cap L^1(\R,dx)$ for some $a>0$. Moreover, let $\int_\R x^4 v_1(x)dx$ be finite and assume that there are numbers $b\geq 0$, $c_\psi > 0$ such that 
	$|\psi(x)| \geq c_\psi (1+x^2)^{-b/2}$ for all $x \in \R$. If either
	\begin{enumerate}
		\item the field $Y$ is $m$-dependent or
		\item the random field $Y$ is $\phi$-mixing such that
		\begin{equation}\label{eq:mixing_condition}
		\sum\limits_{j\in\Z^d\setminus \{0\}}\phi_{\infty,1}(|j|)<\infty \quad \text{and} \quad 
		\sum_{r=1}^\infty (r+1)^{3d-1}[\phi_{2,2}(r)]^{1/4}<\infty
		\end{equation}
	\end{enumerate}
	then
	\begin{equation*}
	\err_1^2(n) \leq \frac{K}{\big(1+(\pi l)^2\big)^a}   +K \frac{l}{n} \big(1+(\pi l)^2\big)^{b},
	\end{equation*}
	for all $l \in \N$ and some constant $K$.
\end{theo}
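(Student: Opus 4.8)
The plan is to run the classical bias--variance analysis for deconvolution estimators built from empirical characteristic functions, the only genuinely new difficulty being that the observations $(Y_j)_{j\in W}$ are dependent rather than i.i.d. Since we are in the pure jump setting with $c=0$ and $u(x)=x$, the weight $\lvert x\rvert^c$ equals $1$, so that $\err_1^2(n)=\E\lVert\widehat{uv_1}-uv_1\rVert_{L^2(\R,\dxShort x)}^2$. Writing $\phi:=\F_+[uv_1]$ for the true Fourier transform and $\hat\phi:=\hat\theta/\tilde\psi=\widehat{\F_+[uv_1]}$, the estimator satisfies $\F_+[\widehat{uv_1}]=\hat\phi\,\one_{[-\pi l,\pi l]}$ by~\eqref{eq:uv_1_estimator_pure_jump}. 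First I would apply Plancherel's theorem (with the paper's Fourier convention) to write
\[ \err_1^2(n) = \frac{1}{2\pi}\,\E\int_{\R}\bigl\lvert \hat\phi(y)\,\one_{[-\pi l,\pi l]}(y) - \phi(y)\bigr\rvert^2\dx y, \]
and then split the integrand according to whether $\lvert y\rvert\le\pi l$ or not. This produces a deterministic truncation (bias) term $\int_{\lvert y\rvert>\pi l}\lvert\phi(y)\rvert^2\dx y$ and a stochastic term $\E\int_{-\pi l}^{\pi l}\lvert\hat\phi(y)-\phi(y)\rvert^2\dx y$, which I treat separately.

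The truncation term is the easy half. Since $uv_1\in H^a(\R)$, by the definition of the Sobolev norm one has $\int_\R\lvert\phi(y)\rvert^2(1+y^2)^a\dx y=\lVert uv_1\rVert_{H^a}^2<\infty$, so using $(1+y^2)^{-a}\le(1+(\pi l)^2)^{-a}$ on $\{\lvert y\rvert>\pi l\}$ gives
\[ \int_{\lvert y\rvert > \pi l}\lvert\phi(y)\rvert^2\dx y \;\le\; \frac{\lVert uv_1\rVert_{H^a}^2}{(1+(\pi l)^2)^a}, \]
which is exactly the first term of the claimed bound (the Plancherel constant being absorbed into $K$).

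For the stochastic term I would work pointwise in $y$. Using the relation $\phi=-i\psi'/\psi$ from~\eqref{FourierRelation} together with the thresholding definition of $1/\tilde\psi$, I would decompose $\hat\phi(y)-\phi(y)$ on the event $\{\lvert\hat\psi(y)\rvert>n^{-1/2}\}$ (where $\hat\phi=\hat\theta/\hat\psi$) and on its complement (where $\hat\phi=0$, so the error is simply $-\phi(y)$). Elementary algebra reduces $\E\lvert\hat\phi(y)-\phi(y)\rvert^2$ to a combination of the mean square deviations $\E\lvert\hat\theta(y)-\E\hat\theta(y)\rvert^2$ and $\E\lvert\hat\psi(y)-\psi(y)\rvert^2$, the squared modulus $\lvert\phi(y)\rvert^2$, and the probability $\P(\lvert\hat\psi(y)\rvert\le n^{-1/2})$, each divided by suitable powers of $\lvert\psi(y)\rvert$; here one uses $\E\hat\psi(y)=\psi(y)$ and $\E\hat\theta(y)=-i\psi'(y)$, both of which follow from the stationarity of $Y$. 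The lower bound $\lvert\psi(y)\rvert\ge c_\psi(1+y^2)^{-b/2}$ then controls all denominators, and a Chebyshev concentration argument for $\hat\psi$ shows that the thresholding event contributes only a term of the same (or lower) order on the range $[-\pi l,\pi l]$.

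The main obstacle is to show that the two empirical variances are of order $1/n$ \emph{uniformly in $y$} in spite of the dependence of the $Y_j$. For i.i.d.\ data one trivially has $\E\lvert\hat\psi(y)-\psi(y)\rvert^2\le 1/n$ and an analogous $\E[Y_0^2]/n$ bound for $\hat\theta$; in our setting I would instead write, with $Z_j:=e^{iyY_j}$ (respectively $Z_j:=Y_je^{iyY_j}$),
\[ \E\Bigl\lvert \frac{1}{n}\sum_{j\in W}\bigl(Z_j - \E Z_j\bigr)\Bigr\rvert^2 \;=\; \frac{1}{n^2}\sum_{i,j\in W}\Cov(Z_i,Z_j) \;\le\; \frac{1}{n}\sum_{k\in\Z^d}\bigl\lvert\Cov(Z_0,Z_k)\bigr\rvert, \]
using stationarity to reduce to lag covariances. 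Under $m$-dependence the lag sum has only finitely many nonzero terms. Under $\phi$-mixing I would invoke covariance inequalities of Davydov type for mixing random fields, bounding $\lvert\Cov(Z_0,Z_k)\rvert$ by a mixing coefficient times $\lVert Z_0\rVert_p\lVert Z_k\rVert_q$: for $Z_j=e^{iyY_j}$ the summands are bounded by $1$ and the first sum in~\eqref{eq:mixing_condition} forces convergence, while for $Z_j=Y_je^{iyY_j}$ one needs $\lVert Y_0\rVert_4<\infty$, which is precisely where the hypothesis $\int_\R x^4 v_1(x)\dx x<\infty$ enters, the $[\phi_{2,2}(r)]^{1/4}$-summability in~\eqref{eq:mixing_condition} then making the lag sum finite (the factor $(r+1)^{3d-1}$ absorbing the number of lattice sites at distance $r$). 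Since these bounds are uniform in $y$, integrating the resulting $O(1/n)$ pointwise estimate over $[-\pi l,\pi l]$ and using $\lvert\psi(y)\rvert^{-2}\le c_\psi^{-2}(1+y^2)^b$ yields
\[ \E\int_{-\pi l}^{\pi l}\lvert\hat\phi(y)-\phi(y)\rvert^2\dx y \;\le\; \frac{C}{n}\int_{-\pi l}^{\pi l}(1+y^2)^b\dx y \;\le\; \frac{K\,l}{n}\bigl(1+(\pi l)^2\bigr)^b, \]
which is the second term of the asserted bound. Collecting the two contributions and absorbing all constants into a single $K$ completes the argument.
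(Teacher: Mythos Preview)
The paper does not actually prove this theorem: it is quoted verbatim from \cite[Corollary~5.2]{KarRothSpoWalk19} with no argument given, so there is no ``paper's own proof'' to compare against.  Your outline follows the standard bias--variance route for regularised empirical-characteristic-function estimators (\`a la Neumann--Rei\ss{} and Comte--Genon-Catalot), which is indeed the methodology of the cited source, and the treatment of the bias term and of the $1/n$ variance bounds under $m$-dependence or $\phi$-mixing via lag-covariance sums is correct in spirit.

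One point deserves more care.  Your explanation that ``the factor $(r+1)^{3d-1}$ absorbs the number of lattice sites at distance $r$'' cannot be right: the number of sites in $\Z^d$ at $\ell^\infty$-distance $r$ is of order $r^{d-1}$, so a pure second-moment bound on $\hat\psi$ or $\hat\theta$ would only require summability of $(r+1)^{d-1}\phi_{2,2}(r)^{1/4}$.  The exponent $3d-1$ in~\eqref{eq:mixing_condition} arises in \cite{KarRothSpoWalk19} from a \emph{fourth}-moment estimate for the centred empirical characteristic function, which involves a triple sum over $W$ and is needed to control the thresholding event $\{\lvert\hat\psi(y)\rvert\le n^{-1/2}\}$ uniformly in $y$ (a Chebyshev bound based on the variance alone is too weak to give a contribution of order $l(1+(\pi l)^2)^b/n$ after integration).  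Apart from this understated step, your plan matches the argument underlying the cited result.
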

\begin{rem}
	\begin{enumerate}
		\item Since the mixing conditions~\eqref{eq:mixing_condition} are not further used in this paper, we omit the definitions of $\phi_{\infty,1}$ and $\phi_{2,2}$ here and refer the interested reader
		to \cite[Section 2.2]{ KarRothSpoWalk19} and references therein.
		\item We emphasize that the estimator $uv_0$ from Section~\ref{sec:estimate_v0} is not bound to a specific choice of the estimator for $uv_1$. Hence, the function $\widehat{uv_1}$ defined in~\eqref{eq:uv_1_estimator_pure_jump} can in general be any estimator that is at least contained in $L^2(\R^\times)$. Corollary~\ref{cor:criterion-for-convergence} yields sufficient conditions on $\widehat{uv_1}$ to be consistent in the mean square sense.
	\end{enumerate}
\end{rem}
It follows from Theorem~\ref{eq:err1_bound} that for an optimal choice of $l$, $\err_1(n) = \mathcal{O}(n^{-a/(2a+2b+1)})$ as $n \rightarrow \infty$. If additionally the conditions of 
Corollary~\ref{cor:criterion-for-convergence} are fulfilled, then the corresponding estimator $\widehat{uv_0}$
defined in~\eqref{eq:our-estimator} is consistent in the mean quadratic sense and (cf. Remark~\ref{rem:criterion-for-convergence})
\begin{equation*}
	\err_0(n) = \mathcal{O}\left( n^{  -\frac{a}{2a+2b+1}  \left( 1 - \frac{\alpha_1}{\alpha_2} \right) } \right),
	\quad \text{as} \ n \rightarrow \infty.
\end{equation*}

In order to find the numbers $b$ and $c_\psi$ above, the following theorem is quite helpful.

\begin{theo}
	There are numbers $b$, $c_\psi$, $C_\psi \geq 0$ such that $c_\psi (1+|x|)^{-b} \leq \psi(x) \leq C_\psi (1+|x|)^{-b}$ for all 
	$x \in \R$ if and only if
	\begin{equation}\label{eq:pol_decay_cond}
	-\log(C_\psi) + b \log (1+|x|) \leq \int_{0}^{x} \Imag \Big(\F_+ uv_1 \Big) (y) \dx y \leq -\log(c_\psi) + b \log (1+|x|)
	\end{equation}
	for all $x \in \R$, where $\Imag (z)$ denotes the imaginary part of a complex number $z$.
\end{theo}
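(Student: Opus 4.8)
The plan is to reduce the claimed two--sided bound entirely to a single identity, namely
\begin{equation*}
	\int_0^x \Imag\big(\F_+[uv_1]\big)(y)\dx y = -\log\lvert\psi(x)\rvert \qquad\text{for all }x\in\R.
\end{equation*}
(Since $\psi$ is complex--valued, the quantity $\psi(x)$ in the statement is to be read as its modulus $\lvert\psi(x)\rvert$, exactly as in Theorem~\ref{eq:err1_bound}.) Once this identity is available, the theorem becomes a pure matter of taking logarithms: because $\lvert\psi(x)\rvert>0$ for every $x$, and assuming without loss of generality $c_\psi,C_\psi>0$ (the degenerate cases being trivial or impossible), the sandwich $c_\psi(1+\lvert x\rvert)^{-b}\le\lvert\psi(x)\rvert\le C_\psi(1+\lvert x\rvert)^{-b}$ is equivalent, after applying the increasing map $\log$ and multiplying by $-1$, to the inequality~\eqref{eq:pol_decay_cond}, with the very same constants $b,c_\psi,C_\psi$. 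This delivers both implications simultaneously.

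To establish the identity I would start from the cumulant form of the pure jump characteristic function, $\log\psi(y)=\int_\R(e^{iyx}-1)v_1(x)\dx x$, and take real parts; as the modulus involves no branch ambiguity, this gives
\begin{equation*}
	\log\lvert\psi(y)\rvert=\Real\!\int_\R(e^{iyx}-1)v_1(x)\dx x=\int_\R\big(\cos(yx)-1\big)v_1(x)\dx x.
\end{equation*}
On the other hand, recalling from~\eqref{FourierRelation} that $\F_+[uv_1](y)=\int_\R e^{iyx}\,x\,v_1(x)\dx x$ with $u(x)=x$, we have $\Imag\big(\F_+[uv_1]\big)(y)=\int_\R\sin(yx)\,x\,v_1(x)\dx x$. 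Integrating this in $y$ over $[0,x]$ and interchanging the order of integration (Fubini being justified by the standing assumption $\int_\R\lvert x\rvert v_1(x)\dx x<\infty$, which dominates the integrand by $\lvert x'\rvert v_1(x')$ uniformly in $y$), the inner integral $\int_0^x\sin(yx')\dx y=(1-\cos(xx'))/x'$ cancels the factor $x'$ and yields $\int_0^x\Imag(\F_+[uv_1])(y)\dx y=\int_\R(1-\cos(xx'))v_1(x')\dx x'$, which is precisely $-\log\lvert\psi(x)\rvert$ by the previous display. Equivalently, one may differentiate $\log\lvert\psi\rvert$ in $y$, justified by the same integrability, to get $\tfrac{d}{dy}\log\lvert\psi(y)\rvert=-\Imag(\F_+[uv_1])(y)$, and then integrate from $0$ using $\psi(0)=1$.

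The argument is almost entirely routine; the only points that require genuine care are the interchange of integration (resp.\ the differentiation under the integral sign), which rests on the integrability hypothesis $\int_\R\lvert x\rvert v_1(x)\dx x<\infty$ in force throughout this subsection, together with the harmless observation that $\lvert\psi\rvert$ is strictly positive so that all logarithms stay finite. I do not expect any serious obstacle beyond bookkeeping with the constants $c_\psi,C_\psi$ and the reading of $\psi(x)$ as $\lvert\psi(x)\rvert$.
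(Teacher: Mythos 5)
Your argument is correct and pivots on exactly the same identity as the paper's proof, namely $\log\lvert\psi(x)\rvert = -\int_0^x \Imag\big(\F_+ uv_1\big)(y)\dx y$, after which the theorem is indeed just an application of $-\log$ to the two-sided bound (your remarks that $C_\psi=0$ is impossible because $\psi(0)=1$, that $c_\psi=0$ makes the corresponding inequality vacuous, and that $\psi(x)$ must be read as $\lvert\psi(x)\rvert$ are all apt). The only real difference is how the identity is established: the paper rewrites the cumulant integrand $(1-e^{-ixy})v_1(y)$ as a constant multiple of $(\F_+^{-1}\one_{[0,x]})(y)\,(uv_1)(y)$ and invokes the Plancherel isometry of $\F_+$ to convert the integral into $\int_0^x \overline{(\F_+ uv_1)(y)}\dx y$, whereas you compute $\Imag(\F_+[uv_1])(y)=\int_\R \sin(yx')\,x'v_1(x')\dx x'$ and interchange the $y$- and $x'$-integrations by Fubini, the factor $x'$ cancelling against $\int_0^x\sin(yx')\dx y=(1-\cos(xx'))/x'$. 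Your route is marginally more elementary -- it needs only $\int_\R\lvert x\rvert v_1(x)\dx x<\infty$ rather than the $L^2$ pairing -- and it makes the justification of the interchange explicit; the paper's version is shorter once the Fourier-analytic identification is accepted. Both are complete proofs of the same statement.
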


\begin{proof}
	Since the function $uv_1 \in L^2(\R^\times, \dx x)$ is real-valued, we observe that
	\begin{equation*}
		\begin{split}
			|\psi(x)| = |\overline{\psi(x)}| & = \Big| \exp \Big( - \int_{\R} \frac{1 - e^{-ixy}}{y} (uv_1)(y) \dx y \Big) \Big| \\
			& = \Big| \exp \Big( - 2\pi i \int_\R (\F_+^{-1} \one_{[0,x]})(y) (uv_1)(y) \dx y  \Big) \Big|, \quad x \in \R. \\ 
		\end{split}
	\end{equation*}
	Applying the isometry property of the Fourier transform on $L^2(\R^\times, \dx x)$, we obtain
	\begin{equation*}
		\begin{split}
			|\psi(x)| & = \Big| \exp \Big( - i \int_{0}^x \overline{(\F_+uv_1)(y)} \dx y  \Big) \Big| \\
			& = \exp \Big( - \int_{0}^x \Imag \Big( (\F_+uv_1)(y) \Big) \dx y  \Big), \quad x \in \R.
		\end{split}
	\end{equation*}
	From this relation one immediately concludes the assertion of the theorem.
\end{proof}

\begin{rem}\label{rem:pol_decay_cond}
	\begin{enumerate}
		\item The bound in~\eqref{eq:pol_decay_cond} means that $\psi$ is polynomially decaying if and only if the function
		$\R \ni x \to \int_{0}^{x} \Imag \Big(\F_+ uv_1 \Big) (y) \dx y$ is either bounded or increases logarithmically.
		\item Notice that the lower bound in~\eqref{eq:pol_decay_cond} is equivalent to the upper bound for $\psi$ and vice versa.
		\item In fact, it may not always be possible to show existence of numbers $b$, $c_\psi$ and $C_\psi$ such that 
		inequality~\eqref{eq:pol_decay_cond} holds. Therefore, we refer to~\cite{Trabs14} (and the references therein) 
		for further conditions on $\psi$ to be polynomially decaying.
		\item Using relation~\eqref{eq:main-stochatic-part}, all the above mentioned regularity assumptions on $uv_1$ can immediately 
		be transferred to $uv_0$ and hence, implicitly are conditions on the random measure $\Lambda$. 
	\end{enumerate}
\end{rem}

\begin{expl}\label{ex:pure_jump_1}
	Let $d=1$, $c=0$, $f(s) = e^{-s} \one_{[0,\theta]}(s)$ for some $\theta > 0$. Moreover, consider the infinitely divisible random measure $\Lambda$
	with L\'{e}vy characteristics $a_0 = \pi^{-1/2} \int_{0}^{1} x^{1/2} e^{-x} \dx x$, $b_0 = 0$ and $v_0(x) = (\pi x)^{-1/2} e^{- x} \one_{(0,\infty)}(x)$,
	$x \in \R$. Then, the infinitely divisible moving average $ X(t)=\int_{t-\theta}^{t} e^{x-t} \Lambda(dx), \ t\in \R$
	is of pure jump structure with L\'{e}vy density 
	\begin{equation*}
		v_1(x) = \frac{1}{\sqrt{\pi}} x^{-1} \Gamma \Big( \frac{1}{2}, x, xe^{\theta} \Big) \one_{(0,\infty)}(x), \quad x \in \R,
	\end{equation*}
	where $\Gamma(x,y,z) = \int_{y}^z t^{x-1} e^{-t} \dx t$, $x > 0$ denotes the incomplete gamma function truncated at
	$y, \ z > 0$. Since the support of $f$ is bounded, $X$ is $m$-dependent with $m=\theta$. By Fubini's theorem, it follows that
	\begin{equation*}
		(\F_+ uv_1)(x) = \frac{1}{2} \int_{(0,\theta)} \frac{e^{s/2}}{(e^s - ix)^{3/2}} \dx s
	\end{equation*}
	and thus applying Minkowski's inequality for integrals shows that $uv_1 \in H^{a}(\R)$ for any $a < 1$. 
	In order to show existence of numbers $b \geq 0$ and $c_\psi > 0$ such that $|\psi(x)| \geq c_\psi (1+x^2)^{-b/2}$ for all $x \in \R$, it suffices to
	verify the upper bound in~\eqref{eq:pol_decay_cond} (cf. Remark~\ref{rem:pol_decay_cond}, (2)). Indeed, by Fubini's theorem, we observe that
	\begin{equation*}
		\begin{split}
			\int_0^x \F_+ uv_1 (y) \dx y & = \frac{1}{2} i \Big( \theta - \int_0^\theta \frac{e^{s/2}}{(e^s - ix)^{1/2}} \dx s \Big) \\
			& = i \Big( \theta - 2\log(e^{\theta/2} + \sqrt{e^\theta -ix}) + 2\log(1 + \sqrt{1 -ix}) \Big),
			\quad x \in \R.
		\end{split}
	\end{equation*}
	Hence, 
	\begin{equation*}
		\int_0^x \Imag \Big( \F_+ uv_1 \Big) (y) \dx y = \theta + 2 \log \Big| \frac{1 + \sqrt{1 -ix}}{e^{\theta/2} + \sqrt{e^\theta -ix}} \Big|
		\to \theta \quad \text{as} \ x \to \pm \infty,
	\end{equation*}
	i.e. the integral is bounded; consequently $b=0$ is an appropriate choice. \\
	Note finally that 
	\begin{equation*}
		(\M uv_1)(-e^x) = 0 \quad \text{and} \quad (\M uv_1)(e^x) = \frac{1}{\sqrt{\pi}} e^{x/2} \Gamma \Big( \frac{1}{2}, e^{x}, e^{x + \theta} \Big)
	\end{equation*}
	for all $x \in \R$; thus both of $(\M uv_1)(\exp(\argument))$ and $(\M uv_1)(-\exp(\argument))$ are contained in the Sobolev space of any order.
	Hence, Corollary \ref{cor:criterion-for-convergence} applies in this setting and the estimators $\widehat{uv_0}$, $\widetilde{uv_0}$ for $uv_0$ 
	defined in~\eqref{eq:our-estimator},~\eqref{eq:alternative_estimator} respectively, are consistent in the mean quadratic sense with the rate of convergence given by 
	$$n^{  -\frac{a}{2a+1}  \left( 1 - \frac{1}{\alpha_2} \right)},$$
	as $n \rightarrow \infty$, for fixed $0 < a < 1$, $\alpha_2 > 1$.	
\end{expl}

\begin{expl}\label{ex:pure_jump_2}
	Suppose $d=2$, $c=0$ and let $f(s)$ be as in Example~\ref{example:corr3_6}, (b). Moreover, consider the infinitely divisible random measure $\Lambda$
	with the same L\'{e}vy characteristics $a_0$, $b_0$ and $v_0$ as in Example~\ref{ex:pure_jump_1}. Then, the infinitely 
	divisible moving average random field $ X(t)=\int_{\|x-t\|_2 \leq \kappa} \tau (\kappa^2 - \left\| x-t \right\|_2^2) \Lambda(dx), \ t\in \R^2$
	is of pure jump structure with L\'{e}vy density 
	\begin{equation*}
		v_1(x) = \frac{\sqrt{\pi}}{\tau} \int_{\frac{x}{\tau \kappa^2}}^{\infty} r^{-3/2} e^{-r} \dx r \one_{(0,\infty)}(x), \quad x \in \R.
	\end{equation*}
	As in the previous example $X$ is $m$-dependent since the support of $f$ is bounded, where $m = 2\kappa$. By Fubini's theorem, it follows that
	\begin{equation*}
		(\F_+ uv_1)(x) = \frac{\sqrt{\pi}}{\tau} \int_{(0,\infty)} r^{-3/2} e^{-r} \frac{e^{ixr\tau \kappa^2} - 1 - ixr\tau \kappa^2 e^{ixr\tau \kappa^2}}{x^2} \dx r.
	\end{equation*}
	Thus, $\F_+ uv_1$ is bounded at $x=0$ and of order $\mathcal{O}(x^{-1})$ at infinity, i.e. $uv_1 \in H^a(\R,dx)$ for any $a < 1/2$. \\
	In order to show existence of numbers $b \geq 0$ and $c_\psi > 0$ such that $|\psi(x)| \geq c_\psi (1+x^2)^{-b/2}$ for all $x \in \R$, it suffices to
	verify the upper bound in~\eqref{eq:pol_decay_cond} (cf. Remark~\ref{rem:pol_decay_cond}, (2)). Indeed, by Fubini's theorem, we observe that
	\begin{equation*}
		\begin{split}
			\int_0^x \Imag \Big(\F_+ uv_1 \Big) (y) \dx y & = \frac{\sqrt{\pi}}{\tau} \int_0^\infty r^{-3/2} e^{-r}
			\int_0^x \frac{\sin (y r \tau \kappa^2) - y r \tau \kappa^2 \cos (y r \tau \kappa^2)}{y^2} \dx y \dx r \\
			& = \sqrt{\pi} \kappa^2 \int_0^\infty r^{-1/2} e^{-r} \Big( 1 - \frac{\sin (x r \tau \kappa^2)}{x r \tau \kappa^2} \Big) \dx r,
			\quad x \in \R.
		\end{split}
	\end{equation*}
	Hence, $\Big| \int_0^x \Imag \Big(\F_+ uv_1 \Big) (y) \dx y \Big| \leq c$ for some constant $c > 0$ and all $x \in \R$; consequently
	$b=0$ is an appropriate choice. \\
	Finally, we observe that 
	\begin{equation*}
		(\M uv_1)(-e^x) = 0 \quad \text{and} \quad (\M uv_1)(e^x) = e^{x/2} \frac{\sqrt{\pi}}{\tau} \int_{\frac{e^x}{\tau \kappa^2}}^{\infty} r^{-3/2} e^{-r} \dx r 
	\end{equation*}
	for all $x \in \R$; thus, both of $(\M uv_1)(\exp(\argument))$ and $(\M uv_1)(-\exp(\argument))$ belong to the Sobolev space of any order.
	Hence, Corollary \ref{cor:criterion-for-convergence} applies in this setting, and the estimators $\widehat{uv_0}$, $\widetilde{uv_0}$ for $uv_0$ 
	defined in~\eqref{eq:our-estimator},~\eqref{eq:alternative_estimator} respectively, are consistent in mean quadratic sense with the rate of convergence given by 
	$$n^{  -\frac{a}{2a+1}  \left( 1 - \frac{1}{\alpha_2} \right)},$$
	as $n \rightarrow \infty$, for fixed $0 < a < 1/2$, $\alpha_2 > 1$.
\end{expl}

\begin{rem}\label{rem:comp_time}
	The proposed methods are applicable for arbitrary dimensions $d \in \N$. Nevertheless, for computational reasons,
	we restricted the above examples to $d=1,2$. In fact, simulation of the random field $X$ on a large lattice,
	together with the numerical computation of various integral transforms in the formula for the solution of equation~\eqref{eq:main-stochatic-part} 
	may be very time consuming, cf. Tables \ref{table:result_proc} and \ref{table:result_field}.
\end{rem}


\section{{A general} integral equation} \label{sec:integral-equation}

Here we discuss existence and uniqueness of solution for integral equation~\eqref{eq:integ_equation}. Throughout the entire Section~\ref{sec:integral-equation}, fix a number $c \in \R$ and measurable functions $g,h:\R^d \rightarrow \R$. 

\subsection{The setting}\label{sec:setting_int_eq}

We study the integral equation 
\begin{equation}\label{eq:main}
v(x) = \int_{\supp(h)} g(s) \; w\big(h(s) x\big) \dx s, \qquad x \in \R^\times
\end{equation} 
on the space $L^2(\R^\times, \lvert x\rvert^c \dx x)$; more precisely, for any given $v \in L^2(\R^\times, \lvert x\rvert^c \dx x)$ we give necessary and sufficient conditions for~\eqref{eq:main} to have a unique solution $w$ in the same space.

We start by showing that, under appropriate assumptions on $g$, $h$ and $c$, the right hand side of~\eqref{eq:main} can be interpreted as an integral operator on $L^2(\R^\times, \lvert x\rvert^c \dx x)$.

\begin{prop}\label{prop:operator-on-L_2}
	Assume that the functions $g$, $h$ and the number $c$ satisfy the integrability condition
	\begin{equation}
	\label{eq:integrability-condition}
	C := \int\limits_{\supp(h)} \vert g(s)\rvert \, \lvert h(s)\rvert^{-(c+1)/2} \dx s < \infty.
	\end{equation}
	Then, for each $w \in L^2(\R^\times, \lvert x\rvert^c \dx x)$, the function $s \to g(s) \, w\big(h(s)x\big)$ is integrable over $\supp(h)$ for almost every $x \in \R$. Moreover, the mapping 
	\begin{equation*}
	\Gi: w \mapsto \int\limits_{\supp(h)} g(s) \; w\big(h(s) \argument \big) \dx s 
	\end{equation*}
	is a bounded linear operator on the space $L^2(\R^\times, \lvert x\rvert^c \dx x)$ and has operator norm $\lVert \Gi\rVert \le C$.
\end{prop}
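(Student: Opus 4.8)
The plan is to prove the proposition in two stages: first establish the pointwise integrability claim and an $L^2$ bound for a fixed $w$, then conclude linearity and boundedness of $\Gi$. The key tool throughout will be the substitution that reduces everything to the weighted $L^2$-norm, combined with Minkowski's integral inequality (the continuous version of the triangle inequality) to pull the $\supp(h)$-integral outside the $L^2$-norm in $x$.

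**First I would** fix $w \in L^2(\R^\times, \lvert x\rvert^c \dx x)$ and analyze, for a single $s \in \supp(h)$, the function $x \mapsto w(h(s)x)$. A change of variables $y = h(s)x$ shows that this dilated function again lies in $L^2(\R^\times, \lvert x\rvert^c \dx x)$, with
\begin{equation*}
\big\lVert w(h(s)\,\argument)\big\rVert_{L^2(\R^\times, \lvert x\rvert^c \dx x)} = \lvert h(s)\rvert^{-(c+1)/2}\,\lVert w\rVert_{L^2(\R^\times, \lvert x\rvert^c \dx x)}.
\end{equation*}
Here the exponent $-(c+1)/2$ arises precisely because the substitution contributes a factor $\lvert h(s)\rvert^{-1}$ from $\dx y$ and a factor $\lvert h(s)\rvert^{-c}$ from rewriting the weight $\lvert x\rvert^c$ in terms of $\lvert y\rvert^c$; this is the computation that motivates the form of the integrability condition~\eqref{eq:integrability-condition}.

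**Next I would** apply Minkowski's integral inequality to the vector-valued integral $\Gi w = \int_{\supp(h)} g(s)\, w(h(s)\,\argument)\dx s$, treating the integrand as an $L^2(\R^\times, \lvert x\rvert^c \dx x)$-valued function of $s$. This yields
\begin{equation*}
\lVert \Gi w\rVert_{L^2(\R^\times, \lvert x\rvert^c \dx x)} \le \int_{\supp(h)} \lvert g(s)\rvert\,\big\lVert w(h(s)\,\argument)\big\rVert_{L^2(\R^\times, \lvert x\rvert^c \dx x)} \dx s = C\,\lVert w\rVert_{L^2(\R^\times, \lvert x\rvert^c \dx x)},
\end{equation*}
using the norm identity from the previous step and the definition of $C$. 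The finiteness of this bound simultaneously gives the $L^2$-membership of $\Gi w$ and the operator norm estimate $\lVert\Gi\rVert \le C$; linearity of $\Gi$ is immediate from linearity of the integral. The pointwise integrability of $s \mapsto g(s)\,w(h(s)x)$ for almost every $x$ then follows from a Tonelli/Fubini argument: the double integral $\int\int \lvert g(s)\rvert\,\lvert w(h(s)x)\rvert\,\lvert x\rvert^c \dx s\,\dx x$ is finite by the same substitution-and-$C$ computation, so the inner integral in $s$ is finite for a.e. $x$.

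**The main obstacle** I anticipate is making the application of Minkowski's integral inequality and the Tonelli step fully rigorous, since they require joint measurability of $(s,x)\mapsto g(s)\,w(h(s)x)$ on $\supp(h)\times\R^\times$. Establishing this measurability — given only that $g,h$ are measurable and $w$ is a representative of an $L^2$-class — is the delicate point; one typically argues via the measurability of the composition map and approximation of $w$ by continuous or simple functions. Once joint measurability is in hand, the rest is a routine application of the substitution formula and Minkowski's inequality, so I would not expect to dwell on those calculations.
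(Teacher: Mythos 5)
Your argument is essentially the paper's proof: the paper likewise applies Minkowski's integral inequality to pull the $s$-integral outside the $L^2(\R^\times,\lvert x\rvert^c\dx x)$-norm and then evaluates $\big\lVert w(h(s)\,\argument)\big\rVert_{L^2(\R^\times,\lvert x\rvert^c\dx x)} = \lvert h(s)\rvert^{-(c+1)/2}\lVert w\rVert_{L^2(\R^\times,\lvert x\rvert^c\dx x)}$ by the same substitution, arriving at the bound $C\lVert w\rVert$.

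One detail to fix: your Tonelli argument for the a.e.\ integrability of $s\mapsto g(s)\,w(h(s)x)$ does not work as stated. The double integral $\int\int\lvert g(s)\rvert\,\lvert w(h(s)x)\rvert\,\lvert x\rvert^c\dx s\dx x$ reduces, after the substitution, to a constant times $\int_{\R^\times}\lvert w(y)\rvert\,\lvert y\rvert^c\dx y$, i.e.\ the $L^1(\R^\times,\lvert y\rvert^c\dx y)$-norm of $w$, which need not be finite for $w\in L^2(\R^\times,\lvert y\rvert^c\dx y)$. This detour is unnecessary: your Minkowski computation already shows that $F(x):=\int_{\supp(h)}\lvert g(s)\,w(h(s)x)\rvert\dx s$ satisfies $\int_\R F(x)^2\lvert x\rvert^c\dx x<\infty$, hence $F(x)<\infty$ for almost every $x$, which is exactly the claimed a.e.\ integrability. (Joint measurability of $(s,x)\mapsto g(s)\,w(h(s)x)$ is indeed the tacit hypothesis behind Minkowski's integral inequality here; the paper does not dwell on it either.)
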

\begin{proof}
	For each $w \in L^2(\R^\times, |x|^c dx)$ Minkowski's integral inequality yields
	\begin{align*}
	\lVert \Gi w \rVert_{L^2(\R, \lvert x\rvert^c \dx x)}
	& = \left( \int\limits_{\R}\left( \int\limits_{\supp(h)} \left\lvert g(s) \, w\big(h(s)x\big) \right\rvert \dx s \right)^2 \lvert x\rvert^c \dx x \right)^{1/2} \\
	& \leq \int\limits_{\supp(h)} |g(s)| \left(  \int\limits_{\R} \left\lvert w\big(h(s)x\big) \right\rvert^2 \lvert x\rvert^c \dx x \right)^{1/2} \dx s \\
	& = \lVert w\rVert_{L^2(\R, \lvert x\rvert^c \dx x)} \int\limits_{\supp(h)} \lvert g(s)\rvert \lvert h(s)\rvert^{-(c+1)/2} \dx s < \infty
	\end{align*}
	where the equality between the last two lines follows by a simple integral substitution. This proves the assertions.
\end{proof}

Assume that the integrability condition~\eqref{eq:integrability-condition} holds. Then we can rephrase our integral equation~\eqref{eq:main} in terms of the operator $\Gi$ on $L^2(\R, \lvert x\rvert^c \dx x)$: for given $v \in L^2(\R, \lvert x\rvert^c \dx x)$ our equation has a solution $w$ in the same space if and only if $v$ is contained in the range of $\Gi$. On the other hand, an existing solution is unique if and only if the operator $\Gi$ is injective.

In order to analyse the range and the kernel of $\Gi$ we use techniques from classical harmonic analysis. The structure of $\Gi$ allows us to transform this operator into a multiplication operator by using the Fourier transform on the multiplicative group $\R^\times = \R  \setminus \{0\}$. To this end, we recall a few facts about harmonic analysis on $\R^\times$ in the next subsection.

\subsection{The Fourier transform on $\R^\times$} \label{subsec:fourier-transform-on-multiplicative-real-numbers}

Since the properties of the Fourier transform on the multiplicative group of non-zero real numbers seem to be hardly treated in detail in the literature, we include here a brief overview about them. As explained below, those properties are not difficult to derive from well-known properties of the Fourier transform on the additive groups $\R$ and $\Z/2\Z$, but for the convenience of the reader we write them down explicitly.

When endowed with the usual multiplication and the Euclidean topology, the set $\R^\times = \R \setminus \{0\}$ is a locally compact abelian group and the measure $\frac{\dx x}{|x|}$ is the (unique up to scalar multiples) Haar measure on this group. The \emph{dual group} $\widehat{\R^\times}$ consists by definition of all continuous group homomorphisms from $\R^\times$ into the complex unit circle $\T$; as usual, we endow the dual group with the topology of pointwise convergence. Then the dual group $\widehat{\R^\times}$ is also a locally compact abelian group and it is isomorphic to $\R^\times$ itself in the following way: For each $x \in \R^\times$ we define
\begin{equation*}
\delta(x) := 
\begin{cases}
0 \quad & \text{if } x > 0, \\
1 \quad & \text{if } x < 0.
\end{cases}
\end{equation*}
Then for every $y \in \R^\times$ the mapping
\begin{align*}
\xi_y: \R^\times \to \T, \quad \xi_y(x) = e^{i \log\lvert x\rvert \cdot \log \lvert y\rvert} \cdot e^{i\pi \delta(x) \delta(y)} 
\end{align*}
is an element of the dual group $\widehat{\R^\times}$. Moreover, all elements of $\widehat{\R^\times}$ arise in this way, and $\R^\times \ni y \to \xi_y \in \widehat{\R^\times}$ is an isomorphism of the locally compact abelian groups $\R^\times$ and $\widehat{\R^\times}$. 

In fact, all assertions above can easily be concluded from well-known properties of the additive groups $\R$ and $\Z/2\Z = \{\overline{0},\overline{1}\}$: It is a standard fact in harmonic analysis that the dual groups of $\R$ and $\Z/2\Z$ are isomorphic to those groups themselves, respectively. More precisely, the dual group of $\R$ consists of all mappings of the form
\begin{align*}
\R \ni x \mapsto e^{ixy} \in T
\end{align*}
for fixed $y \in \R$, while the dual group of $\Z/2\Z$ consists of all mappings of the form
\begin{align*}
\Z/2\Z \ni \overline{x} \mapsto e^{i\pi xy} \in \T
\end{align*}
for fixed $\overline{y} \in \Z/2\Z$. Since the multiplicative group $\R^\times$ is isomorphic to the product group $\R \times \Z/2\Z$ via the group isomorphism
\begin{equation*}
\R^\times \ni x \to (\log\lvert x\rvert, \overline{\delta(x)}) \in \R \times \Z / 2\Z,
\end{equation*}
one easily concludes that the dual group of $\R^\times$ has the properties claimed above (use that $\widehat{G \times H}$ is isomorphic to $\hat{G} \times \hat {H}$ for all locally compact abelian groups!).

From now on, we identify the multiplicative group $\R^\times$ and its dual group $\widehat{\R^\times}$ via the isomorphism $y \to \xi_y$. The Fourier transform on the group $\R^\times$ thus becomes a bijective linear mapping
\begin{equation*}
\F_\times: L^2(\R^\times, \frac{\dxShort x}{\lvert x\rvert}) \to L^2(\R^\times, \frac{\dxShort x}{\lvert x\rvert}).
\end{equation*}
Using the well-known formulas for the Fourier transform on the groups $\R$ and $\Z/2\Z$ and, again, the fact that $\R^\times$ is isomorphic to $\R \times \Z/2\Z$, we also obtain a formula for the Fourier transform $\F_\times$ on the multiplicative group $\R^\times$: for all functions $u \in L^2(\R^\times, \frac{\dxShort x}{\lvert x\rvert}) \cap L^1(\R^\times, \frac{\dxShort x}{\lvert x\rvert})$ we can compute $\F_\times u$ explicitly by means of the formula
\begin{equation} \label{eq:fourier-transform}
(\F_\times u)(y) = \int_{\R^\times} u(x) \; e^{-i\log\lvert x\rvert \cdot \log \lvert y \rvert} \cdot e^{i\pi \delta(x) \delta(y)} \, \frac{\dxShort x}{\lvert x\rvert}.
\end{equation}
We point out that the multiple $\frac{1}{2\sqrt{\pi}} \F_\times$ of $\F_\times$ is a unitary operator on the Hilbert space $L^2(\R^\times, \frac{\dxShort x}{\lvert x\rvert})$ and that the Fourier inversion formula
\begin{equation} \label{eq:inverse-multiplicative-fourier-transform}
	\frac{1}{4\pi} (\F_\times^2 u)(x) = u(1/x)
\end{equation}
holds for all $u \in L^2(\R^\times, \frac{\dxShort x}{\lvert x\rvert})$. 

The smoothness of a function is closely related to the growth behaviour of its Fourier transform: indeed, for each $\alpha \in [0,\infty)$, a function $f \in L^2(\R,\dxShort x)$ is contained in the Sobolev space $H^\alpha(\R, \dx x)$ if and only if the Fourier transform of $f$ (with respect to the additive group $\R$), multiplied with $1+|x|^\alpha$, is contained in $L^2(\R,\dxShort x)$. From this, one immediately obtains an analogue result on the multiplicative group $\R^\times$ which we now state explicitly for the sake of later reference.

\begin{prop} \label{prop:fourier-and-sobolev}
	Fix $\alpha \in [0,\infty)$. For each $u \in L^2(\R^\times, \frac{\dxShort x}{x})$ the following assertions are equivalent:
	\begin{enumerate}
		\item[(i)] Both the functions $u\big(\exp(\argument)\big)$ and $u\big(-\exp(\argument)\big)$ belong to the Sobolev space $H^\alpha(\R, \dx x)$.
		\item[(ii)] The function $x \mapsto \big(1 + \left\lvert\log\lvert x\rvert \right\rvert^\alpha\big) (\F_\times u)(x)$ is contained in $L^2(\R^\times, \frac{\dxShort x}{|x|})$.
	\end{enumerate}
\end{prop}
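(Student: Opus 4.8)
The plan is to reduce the multiplicative statement in Proposition~\ref{prop:fourier-and-sobolev} to the already-invoked characterization of Sobolev regularity on the additive group $\R$, exploiting the isomorphism $\R^\times \cong \R \times \Z/2\Z$ that underlies the whole subsection. First I would make the identification explicit: every $u \in L^2(\R^\times, \frac{\dx x}{|x|})$ corresponds, via the map $x \mapsto (\log|x|, \overline{\delta(x)})$, to the pair of functions $u_+ := u(\exp(\argument))$ and $u_- := u(-\exp(\argument))$ on $\R$, and a change of variables $t = \log|x|$ turns $\frac{\dx x}{|x|}$ into Lebesgue measure $\dx t$; hence $u \in L^2(\R^\times, \frac{\dx x}{|x|})$ if and only if $u_+, u_- \in L^2(\R, \dx x)$. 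Condition~(i) is then literally the statement that $u_+, u_- \in H^\alpha(\R, \dx x)$, so by the additive characterization recalled just before the proposition it is equivalent to $(1 + |x|^\alpha)(\F_+ u_\pm)(x) \in L^2(\R, \dx x)$ for both signs.

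The heart of the argument is to connect the additive Fourier transforms $\F_+ u_\pm$ to the single multiplicative Fourier transform $\F_\times u$ through the explicit formula~\eqref{eq:fourier-transform}. Writing the integral over $\R^\times$ as a sum of the contributions from the positive and negative half-lines and substituting $t = \log|x|$, the character $e^{-i\log|x|\cdot\log|y|}e^{i\pi\delta(x)\delta(y)}$ decouples into an additive part $e^{-it\log|y|}$ and the $\Z/2\Z$-part $e^{i\pi\delta(x)\delta(y)}$. For $y>0$ (so $\delta(y)=0$) one obtains $(\F_\times u)(y) = (\F_+ u_+)(\log|y|) + (\F_+ u_-)(\log|y|)$ up to the normalizing constant, while for $y<0$ (so $\delta(y)=1$) the sign in the second term flips, giving $(\F_\times u)(y) = (\F_+ u_+)(\log|y|) - (\F_+ u_-)(\log|y|)$. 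Thus, after the substitution, the values of $\F_\times u$ on the two half-lines are precisely the sum and difference of $\F_+ u_+$ and $\F_+ u_-$, which is exactly the Fourier transform on $\Z/2\Z$ acting in the discrete variable. Because this $2\times 2$ transform is invertible (indeed unitary up to a constant), the pair $(\F_+u_+, \F_+u_-)$ and the pair $\big((\F_\times u)(\exp(\argument)), (\F_\times u)(-\exp(\argument))\big)$ determine each other by a fixed invertible linear map with constant coefficients.

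With this dictionary in hand the equivalence is immediate. The weight in condition~(ii), $1 + |\log|x||^\alpha$, becomes $1 + |t|^\alpha$ under $t = \log|x|$, matching the additive weight $1 + |t|^\alpha$ appearing in the Sobolev characterization (the elementary comparability $1 + |t|^\alpha \asymp (1+t^2)^{\alpha/2}$ lets me pass freely between the two standard forms of the weight). Since multiplication by $1 + |t|^\alpha$ commutes with the invertible constant-coefficient transform relating $(\F_+u_+, \F_+u_-)$ to $(\F_\times u$ on the two half-lines$)$, the function $(1 + |\log|x||^\alpha)(\F_\times u)(x)$ lies in $L^2(\R^\times, \frac{\dx x}{|x|})$ if and only if both $(1+|t|^\alpha)(\F_+u_+)(t)$ and $(1+|t|^\alpha)(\F_+u_-)(t)$ lie in $L^2(\R, \dx x)$; and that, by the additive characterization, is exactly condition~(i).

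I expect the only real obstacle to be bookkeeping rather than conceptual: one must verify that the $\Z/2\Z$-factor genuinely produces the sum/difference pairing correctly on each half-line and that no cross terms survive, and one must be careful that the normalizing constant in~\eqref{eq:fourier-transform} (the factor making $\tfrac{1}{2\sqrt\pi}\F_\times$ unitary) does not interfere with the $L^2$-membership, which it does not since it is a nonzero scalar. Because both directions of the equivalence pass through the same invertible linear correspondence, establishing one direction automatically yields the other, so there is no separate hard implication to worry about.
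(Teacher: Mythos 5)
Your argument is correct and is precisely the route the paper intends: the paper states this proposition without proof, asserting that it follows "immediately" from the additive Sobolev characterization via the isomorphism $\R^\times \cong \R \times \Z/2\Z$, and your write-up simply makes that reduction explicit (the sum/difference pairing of $\F_+ u_+$ and $\F_+ u_-$ on the two half-lines, the substitution $t = \log\lvert x\rvert$, and the comparability of the weights $1+\lvert t\rvert^\alpha$ and $(1+t^2)^{\alpha/2}$ all check out). The only cosmetic caveat is that formula~\eqref{eq:fourier-transform} is stated for $L^1\cap L^2$ functions, so the identity relating $\F_\times u$ to $\F_+ u_\pm$ for general $u\in L^2$ should be justified by density and Plancherel, which is routine.
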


\begin{rem}
	We note that the Fourier transform on the multiplicative groups $(0,\infty)$ and $\R^\times$ is closely related to another important integral transform, namely the \emph{Mellin transform}. Our results in the next subsection can also be stated (and proved) in terms of the Mellin transform. Recent explicit examples for the usage of the Mellin transform in the statistics of L\'evy processes can be found in \cite{BelomSchoenm16,BelGol18}.
	In the latter paper, an integral equation similar to~\eqref{eq:main} is treated, and in \cite{BelomSchoenm16}  it is used for density estimation of the time change in a L\'evy process.
	
	The reason why we prefer to use the Fourier transform on $\R^\times$ in this paper is mainly a notational one: the Mellin transform is defined for functions $(0,\infty) \to \C$, so if we wish to analyse a function $\varphi: \R^\times \to \C$, we would have to consider two Mellins transforms -- one of the function $(0,\infty) \ni x \mapsto \varphi(x) \in \C$ and one of the function $(0,\infty) \ni x \mapsto \varphi(-x) \in \C$. This would make many of the Hilbert space isomorphisms that we employ in the subsequent section even more complicated to write down.
\end{rem}

We will use the Fourier transform $\F_\times$ to show that the operator $\Gi$ defined in Proposition~\ref{prop:operator-on-L_2} is unitarily similar to a multiplication operator.


\subsection{Solution theory for our integral equation} \label{subsection:solution-theory}

In what follows, we give necessary and sufficient criteria for the intregral equation~\eqref{eq:main} to have a unique solution for all $v$. Recall from Proposition~\ref{prop:operator-on-L_2} that $\Gi$ is a bounded linear operator on $L^2(\R, \lvert x\rvert^c\dx x) = L^2(\R^\times, \lvert x\rvert^c\dx x)$
and that the Fourier transform $\F_\times$ is an operator on $L^2(\R^\times,\frac{\dxShort x}{\lvert x \rvert})$. Thus, in order to combine the Fourier transform $\F_\times$ with the operator $\Gi$ we have to intertwine these two maps by a similarity transform between those two Hilbert spaces. To this end, note that the mapping
\begin{equation*}
\M: L^2(\R^\times, \lvert x\rvert^c \dx x) \to L^2(\R^\times, \frac{\dxShort x}{\lvert x\rvert}), \qquad (\M u)(x) = \lvert x\rvert^{(c+1)/2} \, u(x)
\end{equation*}
is a unitary linear operator; this can be checked by a brief computation. Now, we define a linear operator
\begin{equation*}
\tilde \Gi := \F_\times \M \Gi \M^{-1} \F_\times^{-1} = (\frac{1}{2\sqrt{\pi}}\F_\times)\,  \M \Gi \M^{-1} \, (\frac{1}{2\sqrt{\pi}}\F_\times)^{-1}
\end{equation*}
on the space $L^2(\R^\times, \frac{\dxShort x}{\lvert x\rvert})$. The definition of $\tilde \Gi$ is well illustrated by the following commutative diagram:
\begin{equation} \label{eq:commutative-diagram}
\begin{tikzcd}
L^2(\R^\times, \frac{\dxShort x}{\lvert x\rvert}) \arrow{d}{(\frac{1}{2\sqrt{\pi}}\F_\times)^{-1}} \arrow{rrr}{\tilde \Gi} & & & L^2(\R^\times, \frac{\dxShort x}{\lvert x\rvert})\\
L^2(\R^\times, \frac{\dxShort x}{\lvert x\rvert}) \arrow{d}{\M^{-1}} & & & L^2(\R^\times, \frac{\dxShort x}{\lvert x\rvert}) \arrow{u}{\frac{1}{2\sqrt{\pi}} \F_\times} \\
L^2(\R^\times, \lvert x\rvert^c \dx x) \arrow{rrr}{\Gi} & & & L^2(\R^\times, \lvert x\rvert^c \dx x) \arrow{u}{\M}
\end{tikzcd}
\end{equation}
As $\M$ and $\frac{1}{2\sqrt{\pi}} \F_\times$ are unitaries, the operators $\Gi$ and $\tilde \Gi$ are similar in the sense that they are intertwined by a Hilbert space isomorphism. In particular, $\Gi$ is injective if and only if $\tilde \Gi$ is injective. Moreover, our integral equation~\eqref{eq:main}, which can be rewritten as $\Gi w = v$, is equivalent to $\tilde \Gi \F_\times \M w  = \F_\times \M v$. Hence, the integral equation has a solution if and only if $\F_\times \M v$ is contained in the range of $\tilde \Gi$.

Now, the point is that the operator $\tilde \Gi$ has a very simple structure: in fact $\tilde \Gi$ is a multiplication operator, as explained in the subsequent theorem. To state the theorem, the following three functions are important: whenever the integrability condition~\eqref{eq:integrability-condition} is satisfied, we define $m_+,m_-: \R \to \C$ and $\mu: \R^\times \to \C$ by
\begin{equation}
\label{eq:functions-m}
\begin{split}
m_+(x) & := \int\limits_{\supp(h)} g(s) \, \lvert h(s)\rvert^{-(c+1)/2} \, e^{i x \, \log \lvert h(s)\rvert} \dx s, \\
m_-(x) & := \int\limits_{\supp(h)} g(s) \, \lvert h(s)\rvert^{-(c+1)/2} \, e^{i x \, \log \lvert h(s)\rvert} \, \sgn h(s) \dx s, \\
\mu(y) & :=
\begin{cases}
m_+(\log\lvert y\rvert) \quad & \text{if } y > 0, \\
m_-(\log\lvert y\rvert) \quad & \text{if } y < 0.
\end{cases}
\end{split}
\end{equation}
The functions $m_+$ and $m_-$ are bounded and continuous functions from $\R$ to $\C$ (the continuity follows from the dominated convergence theorem) and hence, $\mu$ is bounded and continuous from $\R^\times$ to $\C$.

\begin{theo} \label{theo:similar-to-multiplication-operator}
	Assume that the integrability condition~\eqref{eq:integrability-condition} is satisfied. Then the linear operator $\tilde \Gi$ on $L^2(\R^\times, \frac{\dxShort x}{\lvert x\rvert})$ is given by
	\begin{equation*}
	\tilde \Gi u = \mu u \qquad \text{for all } u \in L^2(\R^\times, \frac{\dxShort x}{\lvert x\rvert});
	\end{equation*}
	here, $\mu: \R^\times \to \C$ is the bounded and continuous function defined in~\eqref{eq:functions-m}.
\end{theo}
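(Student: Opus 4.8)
The plan is to prove the stronger intertwining identity $\F_\times \M \Gi = M_\mu\, \F_\times \M$, where $M_\mu$ denotes multiplication by $\mu$ on $L^2(\R^\times, \frac{\dxShort x}{\lvert x\rvert})$; the theorem then follows immediately, since composing on the right with $\M^{-1}\F_\times^{-1}$ gives $\tilde\Gi = \F_\times\M\Gi\M^{-1}\F_\times^{-1} = M_\mu\,\F_\times\M\M^{-1}\F_\times^{-1} = M_\mu$. Both $\F_\times\M\Gi$ and $M_\mu\F_\times\M$ are bounded operators from $L^2(\R^\times,\lvert x\rvert^c\dx x)$ to $L^2(\R^\times,\frac{\dxShort x}{\lvert x\rvert})$ (the former by Proposition~\ref{prop:operator-on-L_2} together with boundedness of $\F_\times$ and $\M$, the latter because $\mu$ is bounded), so it suffices to verify the identity on a dense subset. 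I would use the set of those $w$ for which $\M w \in L^1(\R^\times,\frac{\dxShort x}{\lvert x\rvert}) \cap L^2(\R^\times,\frac{\dxShort x}{\lvert x\rvert})$; since $\M$ is a unitary isomorphism and $L^1\cap L^2$ is dense, this set is dense in $L^2(\R^\times,\lvert x\rvert^c\dx x)$. On it, both $\M w$ and $\M\Gi w$ lie in $L^1\cap L^2$ (the latter by the estimate below), so the explicit integral formula~\eqref{eq:fourier-transform} for $\F_\times$ legitimately applies on both sides.

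The core computation is to evaluate $(\F_\times\M\Gi w)(y)$ by writing out $\F_\times$ via~\eqref{eq:fourier-transform} and $\Gi w$ via its definition, and then interchanging the $x$-integral with the $s$-integral. The interchange is justified by Tonelli: substituting $t = h(s)x$ and using the dilation invariance of the Haar measure $\frac{\dxShort x}{\lvert x\rvert}$, the integral of the modulus of the double integrand equals $\lVert \M w\rVert_{L^1(\dxShort x/\lvert x\rvert)}\int_{\supp(h)} \lvert g(s)\rvert\,\lvert h(s)\rvert^{-(c+1)/2}\dx s$, which is finite by the integrability condition~\eqref{eq:integrability-condition}; the very same estimate also shows $\M\Gi w \in L^1$. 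After interchanging, I would carry out the substitution $t = h(s)x$ in the inner integral. The decisive point is that the factors in~\eqref{eq:fourier-transform} are multiplicative: since $\log\lvert t/h(s)\rvert = \log\lvert t\rvert - \log\lvert h(s)\rvert$ and $\delta(t/h(s)) \equiv \delta(t) + \delta(h(s)) \pmod 2$, both $e^{-i\log\lvert x\rvert\log\lvert y\rvert}$ and $e^{i\pi\delta(x)\delta(y)}$ split into a factor in $t$ times a factor in $h(s)$; combined with $\lvert x\rvert^{(c+1)/2} = \lvert t\rvert^{(c+1)/2}\lvert h(s)\rvert^{-(c+1)/2}$, this pulls the factor $g(s)\,\lvert h(s)\rvert^{-(c+1)/2}\,e^{i\log\lvert h(s)\rvert\log\lvert y\rvert}\,e^{i\pi\delta(h(s))\delta(y)}$ out of the inner integral, while what remains reassembles exactly to $(\F_\times\M w)(y)$. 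Collecting terms gives $(\F_\times\M\Gi w)(y) = (\F_\times\M w)(y)\int_{\supp(h)} g(s)\,\lvert h(s)\rvert^{-(c+1)/2}\,e^{i\log\lvert h(s)\rvert\log\lvert y\rvert}\,e^{i\pi\delta(h(s))\delta(y)}\dx s$.

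It then remains to identify the $s$-integral with $\mu(y)$, which is a short case distinction on the sign of $y$. For $y>0$ one has $\delta(y)=0$, so $e^{i\pi\delta(h(s))\delta(y)}\equiv 1$ and the integral is exactly $m_+(\log\lvert y\rvert)$. For $y<0$ one has $\delta(y)=1$, so $e^{i\pi\delta(h(s))\delta(y)} = e^{i\pi\delta(h(s))}$, which equals $+1$ when $h(s)>0$ and $-1$ when $h(s)<0$, i.e.\ equals $\sgn h(s)$; the integral is then exactly $m_-(\log\lvert y\rvert)$. In both cases this is $\mu(y)$ as defined in~\eqref{eq:functions-m}, so $\F_\times\M\Gi w = M_\mu\F_\times\M w$ on the dense subset, and hence everywhere by continuity.

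The main obstacle I anticipate is not the character algebra (which is clean once one exploits that $x\mapsto e^{-i\log\lvert x\rvert\log\lvert y\rvert}e^{i\pi\delta(x)\delta(y)}$ is a group homomorphism) but the analytic bookkeeping: guaranteeing that the integral representation~\eqref{eq:fourier-transform} for $\F_\times$ — valid a priori only on $L^1\cap L^2$ — may be invoked on \emph{both} sides of the computation, and that the Tonelli interchange is licensed. This is precisely why I would first restrict to the dense class $\{w : \M w \in L^1\cap L^2\}$, where~\eqref{eq:integrability-condition} furnishes the required absolute integrability, and only afterwards extend the identity to all of $L^2(\R^\times,\lvert x\rvert^c\dx x)$ by density and boundedness, rather than attempting the manipulation directly for arbitrary $L^2$ functions.
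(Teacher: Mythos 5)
Your proposal is correct and follows essentially the same route as the paper's proof: reduce to the intertwining identity $\F_\times\M\Gi w = \mu\,\F_\times\M w$ on a dense class where the explicit integral formula~\eqref{eq:fourier-transform} applies, justify the Fubini interchange via the integrability condition~\eqref{eq:integrability-condition}, substitute $t = h(s)x$, and exploit the multiplicativity of the characters (in particular $\delta(x/h(s)) \equiv \delta(x)+\delta(h(s)) \pmod 2$) to split off the multiplier, which is then identified with $\mu$ by the sign-of-$y$ case distinction. The only cosmetic difference is your choice of dense subset ($\{w : \M w \in L^1\cap L^2\}$ versus the paper's continuous compactly supported functions), which changes nothing of substance.
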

\begin{proof}
	We have to show that $\tilde G u = \mu u$ for all $u \in L^2(\R^\times, \frac{\dxShort x}{\lvert x\rvert})$. Note that this is true if and only if $\tilde \Gi \F_\times \M w = \mu \F_\times\M w$, $w \in L^2(\R^\times, \lvert x\rvert^c \dx x)$, and this is in turn equivalent to $\F_\times \M \Gi w = \mu \F_\times\M w$ for all $w \in L^2(\R^\times, \lvert x\rvert^c \dx x)$. In order to prove this equality we may, by a simple density argument, assume that $w$ is a continuous mapping from the topological space $\R^\times$ to $\C$ and that the support of $w$ is contained in a compact subset of $\R^\times$ (which, in particular, implies that $w$ vanishes close to $0$).
	Then $\M w$ is also a continuous function whose support is contained in a compact subset of $\R^\times$, so $\M w$ is contained in $L^1(\R^\times, \frac{\dxShort x}{\lvert x\rvert})$ and hence we may use formula~\eqref{eq:fourier-transform} to compute its Fourier transform $\F_\times\M w$. We obtain
	\begin{equation*}
	(\F_\times\M w)(y) = \int_{\R^\times} \lvert x\rvert^{(c-1)/2} w(x) \, e^{-i\log\lvert x\rvert \, \log\lvert y\rvert} \cdot e^{i\pi \delta(x) \delta(y)} \, \dx x.
	\end{equation*}	
	Let us now show that $\M\Gi w$ is also contained in $L^1(\R^\times, \frac{\dxShort x}{\lvert x\rvert})$, so that $\F_\times \M \Gi w$ can be computed by formula~\eqref{eq:fourier-transform}, too. Indeed, one easily checks that
	\begin{equation*}
	\int\limits_{\R^\times} \left\lvert (\M \Gi w)(x)\right\rvert \, \frac{\dxShort x}{\lvert x\rvert} \le \int\limits_{\supp(h)} \lvert g(s)\rvert \lvert h(s)\rvert^{-(c+1)/2} \dx s \; \int\limits_{\R^\times} \lvert w(x)\rvert \lvert x\rvert^{(c-1)/2} \dx x.
	\end{equation*}
	The latter expression is finite since the integrability condition~\eqref{eq:integrability-condition} is satisfied and since $w$ is continuous and its support is contained in a compact subset of $\R^\times$. Hence, $\M \Gi w \in L^1(\R^\times, \frac{\dxShort x}{\lvert x\rvert})$ and formula~\eqref{eq:fourier-transform} is thus applicable in order to compute $\F_\times \M \Gi w$; the formula yields
	\begin{equation*}
	(\F_\times \M \Gi w)(y) = \int\limits_{\R^\times} \lvert x \rvert^{(c-1)/2} \, \int\limits_{\supp(h)} g(s) \; w\big(h(s) x \big) \dx s \, e^{-i\log\lvert x\rvert \, \log \lvert y\rvert} \cdot e^{i\pi \delta(x) \delta(y)} \, \dx x.
	\end{equation*}
	A similar estimate as above shows that, for each $y \in \R^\times$, the entire integrand of the preceding integral is contained in $L^1(\R^\times \times \supp(h), \frac{\dxShort x}{\lvert x \rvert} \times \dx s)$; we may thus use Fubini's theorem and obtain by a simple substitution that $(\F_\times \M \Gi w)(y)$ is given by
	\begin{align*}
	\int\limits_{\supp(h)} g(s) \, \lvert h(s)\rvert^{-(c+1)/2} \, \int\limits_{\R^\times} \lvert x\rvert^{(c-1)/2} w(x) \, e^{-i\log\frac{\lvert x\rvert}{\lvert h(s) \rvert} \, \log\lvert y\rvert} \cdot e^{i\pi \delta(\frac{x}{h(s)}) \delta(y)} \, \dx x \dx s
	\end{align*}
	for almost all $y \in \R^\times$. Note that $\delta(\frac{x}{h(s)})$ equals $\delta(x) - \delta(h(s))$ modulo $2$, and the latter number equals $\delta(x) + \delta(h(s))$ modulo $2$. This proves that
	\begin{equation*}
	e^{i\pi \delta(\frac{x}{h(s)}) \delta(y)} = e^{i\pi \delta(x) \delta(y)} \cdot e^{i \pi \delta(h(s))\delta(y)}.
	\end{equation*}
	Hence, $(\F_\times \M \Gi w)(y)$ is the product of $(\F_\times \M w)(y)$ with
	\begin{align*}
	\int\limits_{\supp(h)} g(s) \, \lvert h(s)\rvert^{-(c+1)/2} \, e^{i \log \lvert y\rvert \, \log \lvert h(s)\rvert} \cdot e^{i \pi \delta(h(s))\delta(y)} \dx s,
	\end{align*}
	and this function is easily checked to equal $\mu$.
\end{proof}

Since the operators $\Gi$ and $\tilde \Gi$ are unitarily similar, Theorem~\ref{theo:similar-to-multiplication-operator} immediately yields the following conditions for existence and uniqueness of solutions to our integral equation~\eqref{eq:main}.

\begin{cor} \label{cor:injectivit-and-surjectivity-of-G}
	Assume that the integrability condition~\eqref{eq:integrability-condition} is satisfied and let $\mu: \R^\times \to \C$ be the bounded and continuous function defined in~\eqref{eq:functions-m}. The operator $\Gi$ on $L^2(\R^\times, \lvert x\rvert^c \dx x)$ from Proposition~\ref{prop:operator-on-L_2} is  
	\begin{enumerate}
		\item[(a)]  injective if and only if $m_+ \not= 0$ and $m_- \not= 0$ almost everywhere on $\R$ (with respect to the 
		Lebesgue measure).
		\item[(b)]   surjective if and only if it is bijective if and only if $\inf_{x \in \R} \lvert m_+(x)\rvert > 0$ and $\inf_{x \in \R} \lvert m_-(x)\rvert > 0$.
	\end{enumerate}
\end{cor}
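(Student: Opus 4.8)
The plan is to reduce the entire statement to the corresponding properties of a multiplication operator, and then to translate the resulting conditions on $\mu$ back into conditions on $m_+$ and $m_-$. By Theorem~\ref{theo:similar-to-multiplication-operator}, the operator $\tilde\Gi$ on $L^2(\R^\times, \frac{\dxShort x}{\lvert x\rvert})$ is precisely multiplication by the bounded continuous function $\mu$ from~\eqref{eq:functions-m}, and by construction $\Gi$ and $\tilde\Gi$ are intertwined by the Hilbert space isomorphism $\frac{1}{2\sqrt\pi}\F_\times\M$. Hence $\Gi$ is injective (respectively surjective, respectively bijective) if and only if the multiplication operator $u\mapsto \mu u$ has the same property. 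The corollary therefore follows from the elementary characterizations of when a bounded multiplication operator on an $L^2$-space is injective and when it is surjective, together with a change of variables. Throughout I use that the map $y\mapsto \log\lvert y\rvert$ carries the Haar measure $\frac{\dxShort y}{\lvert y\rvert}$ on each of the half-lines $(0,\infty)$ and $(-\infty,0)$ to Lebesgue measure on $\R$, and that on these half-lines $\mu$ equals $m_+\circ\log\lvert\argument\rvert$ and $m_-\circ\log\lvert\argument\rvert$, respectively.

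For part~(a), I would use that multiplication by $\mu$ is injective if and only if $\mu\neq 0$ almost everywhere with respect to the Haar measure: if $\mu$ vanished on a set of positive measure, any nonzero $L^2$-function supported on a finite-measure subset of that set would be mapped to $0$; conversely $\mu u=0$ with $\mu\neq 0$ a.e.\ forces $u=0$ a.e. Splitting $\R^\times$ into its two half-lines and applying the logarithmic substitution above, the condition $\mu\neq 0$ a.e.\ on $\R^\times$ is equivalent to $m_+\neq 0$ almost everywhere on $\R$ together with $m_-\neq 0$ almost everywhere on $\R$, which is the asserted criterion.

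For part~(b), the key observations are that a multiplication operator cannot be surjective without being injective, and that bijectivity is equivalent to the multiplier being bounded away from zero. Indeed, if $\mu$ vanished on a set $A$ of positive measure, then every function in the range of the operator would vanish almost everywhere on $A$, so the operator would fail to be surjective; thus ``surjective'' already forces ``bijective''. For a bijective bounded operator the bounded inverse (open mapping) theorem supplies a bounded inverse $T$, and from $\mu\,(Tv)=v$ — legitimate since $\mu\neq 0$ a.e.\ by injectivity — one reads off that $T$ is multiplication by $1/\mu$; boundedness of $T$ then says exactly that $1/\mu$ is essentially bounded, i.e.\ that $\lvert\mu\rvert$ is bounded away from $0$. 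Conversely, if $\lvert\mu\rvert$ is bounded below then $1/\mu$ is bounded and multiplication by $1/\mu$ is a bounded two-sided inverse, so the operator is bijective and in particular surjective. Passing through the logarithmic substitution once more, and using the continuity of $m_\pm$ so that their essential and genuine infima coincide, the condition that the essential infimum of $\lvert\mu\rvert$ be positive becomes $\inf_{x\in\R}\lvert m_+(x)\rvert>0$ and $\inf_{x\in\R}\lvert m_-(x)\rvert>0$.

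The argument is not deep, but the step requiring the most care is the surjectivity equivalence in~(b): one must first argue that surjectivity of a multiplication operator already entails injectivity (hence bijectivity), then invoke the bounded inverse theorem and correctly identify the abstract inverse as multiplication by $1/\mu$ in order to conclude that $1/\mu\in L^\infty$. Matching the essential infimum of $\lvert\mu\rvert$ to the genuine infima of $\lvert m_\pm\rvert$ via continuity is the final, minor point.
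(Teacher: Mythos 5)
Your proof is correct and follows exactly the route the paper intends: the paper gives no written proof of this corollary, stating only that it follows ``immediately'' from Theorem~\ref{theo:similar-to-multiplication-operator} and the unitary similarity of $\Gi$ and $\tilde\Gi$, and your argument supplies precisely the standard multiplication-operator facts (injectivity iff $\mu\neq 0$ a.e., surjectivity forces injectivity, bounded inverse theorem identifying the inverse as multiplication by $1/\mu$) together with the logarithmic change of variables that the authors leave implicit. No gaps.
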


The condition $\inf_{y \in \R^\times} \lvert \mu(y)\rvert > 0$ in part~(b) of the above corollary is rather restrictive, and in applications it happens quite frequently that $\Gi$ is not surjective. However, in order to solve our integral equation~\eqref{eq:main} we do not really need $\Gi$ to be surjective -- it suffices, of course, if $v$ is contained in the range of $\Gi$. Thus, the following corollary is quite useful.

\begin{cor} \label{cor:criterion-for-solvability}
	Fix $\alpha \in [0,\infty)$. Assume that the integrability condition~\eqref{eq:integrability-condition} is fulfilled and suppose that the two functions $m_+,m_-: \R \to \C$ defined in~\eqref{eq:functions-m} satisfy the estimate
	\begin{equation}\label{eq:lower_bound_m}
	\lvert m_{\pm}(x)\rvert \ge \frac{\gamma}{1 + \lvert x\rvert^\alpha}
	\end{equation}
	for all $x \in \R$ and a constant $\gamma > 0$. If $v \in L^2(\R^\times, \lvert x\rvert^c\dx x)$ and 
	if both functions
	\begin{equation*}
	(\M v)(\exp(\argument)) \qquad \text{and} \qquad (\M v)(-\exp(\argument))
	\end{equation*}
	are contained in the Sobolev space $H^\alpha(\R, \dx x)$, then our integral equation~\eqref{eq:main} has a unique solution $w \in L^2(\R^\times, \lvert x\rvert^c\dx x)$ given by $w =  \M^{-1} \F_\times^{-1} \big(\frac{1}{\mu} \F_\times \M v\big)$.
\end{cor}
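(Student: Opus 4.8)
The plan is to exploit the unitary similarity established in Theorem~\ref{theo:similar-to-multiplication-operator}, which turns the integral equation $\Gi w = v$ into the multiplication equation $\mu \cdot (\F_\times \M w) = \F_\times \M v$ on the space $L^2(\R^\times, \frac{\dxShort x}{\lvert x\rvert})$. Since $\frac{1}{2\sqrt{\pi}}\F_\times$ and $\M$ are unitaries, solving for $w$ is equivalent to finding $U := \F_\times \M w$ with $\mu U = \F_\times \M v$; the only candidate is $U = \frac{1}{\mu}\F_\times\M v$, which is exactly the proposed formula after undoing the similarity. The whole argument therefore reduces to one substantive verification: that this candidate $U$ genuinely lies in $L^2(\R^\times, \frac{\dxShort x}{\lvert x\rvert})$, so that $w := \M^{-1}\F_\times^{-1}\big(\frac{1}{\mu}\F_\times\M v\big)$ is a bona fide element of $L^2(\R^\times,\lvert x\rvert^c\dx x)$.

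For that verification -- the heart of the matter -- I would combine the lower bound on $m_\pm$ with Proposition~\ref{prop:fourier-and-sobolev}. First, by construction $\lvert\mu(y)\rvert$ equals $\lvert m_+(\log\lvert y\rvert)\rvert$ or $\lvert m_-(\log\lvert y\rvert)\rvert$ according as $y>0$ or $y<0$, so the hypothesis~\eqref{eq:lower_bound_m} yields the pointwise estimate $\frac{1}{\lvert\mu(y)\rvert}\le\gamma^{-1}\big(1+\lvert\log\lvert y\rvert\rvert^\alpha\big)$ for all $y\in\R^\times$; in particular $\mu$ never vanishes, so $\frac{1}{\mu}$ makes sense. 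Hence $\big\lvert\frac{1}{\mu}\F_\times\M v\big\rvert\le\gamma^{-1}\big(1+\lvert\log\lvert y\rvert\rvert^\alpha\big)\,\lvert(\F_\times\M v)(y)\rvert$ pointwise. Now the assumption that $(\M v)(\exp(\argument))$ and $(\M v)(-\exp(\argument))$ belong to $H^\alpha(\R,\dx x)$ is precisely condition~(i) of Proposition~\ref{prop:fourier-and-sobolev} applied to $u=\M v\in L^2(\R^\times,\frac{\dxShort x}{\lvert x\rvert})$; that proposition then delivers condition~(ii), namely that $y\mapsto\big(1+\lvert\log\lvert y\rvert\rvert^\alpha\big)(\F_\times\M v)(y)$ lies in $L^2(\R^\times,\frac{\dxShort x}{\lvert x\rvert})$. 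Dominating $\frac{1}{\mu}\F_\times\M v$ by this function gives $\frac{1}{\mu}\F_\times\M v\in L^2(\R^\times,\frac{\dxShort x}{\lvert x\rvert})$, as needed.

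It then remains to confirm that the resulting $w$ actually solves the equation and that it is the unique solution. For the former, by construction $\F_\times\M w=\frac{1}{\mu}\F_\times\M v$, hence $\mu\,\F_\times\M w=\F_\times\M v$; reading this through Theorem~\ref{theo:similar-to-multiplication-operator} as $\F_\times\M\Gi w=\F_\times\M v$ and cancelling the injective map $\F_\times\M$ yields $\Gi w=v$. For uniqueness, the same lower bound forces $m_+\neq 0$ and $m_-\neq 0$ everywhere (as $\gamma>0$), so $\Gi$ is injective by Corollary~\ref{cor:injectivit-and-surjectivity-of-G}(a), and the $w$ just constructed is therefore the only solution. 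I do not expect a genuine obstacle: the single nontrivial step is recognising that~\eqref{eq:lower_bound_m} converts the logarithmic weight supplied by Proposition~\ref{prop:fourier-and-sobolev} into exactly the bound that keeps $\frac{1}{\mu}\F_\times\M v$ square-integrable, and everything else is bookkeeping along the commutative diagram~\eqref{eq:commutative-diagram}.
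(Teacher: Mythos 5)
Your proposal is correct and follows essentially the same route as the paper: uniqueness from Corollary~\ref{cor:injectivit-and-surjectivity-of-G}(a), and existence by noting that the lower bound~\eqref{eq:lower_bound_m} gives $\frac{1}{\lvert\mu(y)\rvert}\le\gamma^{-1}\big(1+\lvert\log\lvert y\rvert\rvert^\alpha\big)$ while Proposition~\ref{prop:fourier-and-sobolev} puts $\big(1+\lvert\log\lvert y\rvert\rvert^\alpha\big)\F_\times\M v$ in $L^2(\R^\times,\frac{\dxShort x}{\lvert x\rvert})$, so that $\frac{1}{\mu}\F_\times\M v$ is square-integrable and the formula for $w$ makes sense. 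The paper phrases the final step as showing $\F_\times\M v$ lies in the range of $\tilde\Gi$, but this is only a cosmetic difference from your direct verification.
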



\begin{proof}[Proof of Corollary~\ref{cor:criterion-for-solvability}]
	It follows from Corollary~\ref{cor:injectivit-and-surjectivity-of-G}(a) and the estimate on $m_{\pm}$ in the assumptions that the solution of our integral equation is unique whenever it exists. To prove the existence of a solution, let $v \in L^2(\R^\times, \lvert x\rvert^c\dx x)$ satisfy the conditions stated in the assertion of the corollary. We only have to show that $\F_\times \M v$ is contained in the range of the multiplication operator $\tilde G$ on $L^2(\R^\times, \frac{\dxShort x}{\lvert x\rvert})$; in this case, the solution $w$ clearly exists and is of the claimed form. Since $(\M v)(\exp(\argument))$, $(\M v)(-\exp(\argument)) \in H^\alpha(\R, \dx x)$, Proposition~\ref{prop:fourier-and-sobolev} yields that the function
	\begin{equation*}
	x \mapsto \big(1 + \left\lvert\log\lvert x\rvert \right\rvert^\alpha\big) (\F_\times\M v)(x)
	\end{equation*}
	is contained in $L^2(\R^\times, \frac{\dxShort x}{\lvert x\rvert})$. On the other hand, the assumed estimate for $m_+$ and $m_-$ implies that $\frac{1}{\lvert\mu(x)\rvert} \le \frac{1 + \left\lvert\log\lvert x\rvert \right\rvert^\alpha}{\gamma}$ for all $x \in \R^\times$. Thus, $\frac{\F_\times \M v}{\mu} \in L^2(\R^\times, \frac{\dxShort x}{\lvert x\rvert})$ and hence, $\F_\times\M v = \tilde \Gi \frac{\F_\times \M v}{\mu}$ is contained in the range of $\tilde \Gi$. This proves the assertion.
\end{proof}

Note that, if we set $\alpha = 0$ in Corollary~\ref{cor:criterion-for-solvability}, then we see that the condition $\inf_{y \in \R^\times} \lvert \mu(y)\rvert > 0$ implies that the operator $\Gi$ is surjective. Hence, Corollary~\ref{cor:criterion-for-solvability} can be seen as a refinement of the solvability criterion in Corollary~\ref{cor:injectivit-and-surjectivity-of-G}(b). Let us also point out the following observation concerning the injectivity of the operator $\Gi$:

\begin{rem} \label{rem:holomorphic-case}
	Assume that there exist constants $\gamma_2 \ge \gamma_1 > 0$ such that
	\begin{equation*}
	\gamma_1 \le h(s) \le \gamma_2 \qquad \text{for almost all } s \in \supp(h).
	\end{equation*}
	Then the integrability condition~\eqref{eq:integrability-condition} is equivalent to $g|_{\supp(h)} \in L^1(\supp(h))$, so suppose for the rest of this remark that $g|_{\supp(h)} \in L^1(\supp(h))$. 
	
	The assumptions that we just imposed on $h$ imply that $\sup_{s \in \supp(h)} \left\lvert \log \lvert h(s)\rvert \right\rvert < \infty$. From this, it easily follows that the integrals in~\eqref{eq:functions-m} that define $m_+(x)$ and $m_-(x)$ also make sense for $x \in \C$. We thus obtain functions $\C \ni x \mapsto m_{\pm}(x) \in \C$ and it is easy to see that those functions are analytic. Hence, if neither $m_+$ nor $m_-$ is identically $0$ on $\C$, then both of those functions vanish at at most countably many points. It thus follows from Corollary~\ref{cor:injectivit-and-surjectivity-of-G}(a) that the operator $\Gi$ from Proposition~\ref{prop:operator-on-L_2} is injective.
	
	To give a more concrete example	of such situation,
	assume that there exists a number $t_1 \in \R$ for which $0 \not= \int_{\supp(h)} g(s) \lvert h(s) \rvert^{t_1} \dx s$; the latter integral equals $m_+\big(-i(t_1 + \frac{c+1}{2})\big)$, so it follows that $m_+$ is not identically $0$ on $\C$. Also assume that there exists a number $t_2 \in \R$ for which we have $0 \not= \int_{\supp(h)} g(s) \lvert h(s) \rvert^{t_2} \, \sgn h(s) \dx s$. The latter integral equals $m_-\big(-i(t_2 + \frac{c+1}{2})\big)$ and thus $m_-$ is not identically $0$ on $\C$. Hence, we can conclude that $\Gi$ is injective. This argument shows in particular that $\Gi$ is injective if we have both $\int_{\supp(h)} g(s) \dx s \not= 0$ and $\int_{\supp(h)} g(s) \, \sgn h(s) \dx s \not= 0$.
	
	Note, however, that such a simple argument does not work if we do not assume $h$ to satisfy the estimate specified at the beginning of this remark.
\end{rem}

We close this section by pointing out that, while all the function spaces used above consist of complex-valued functions (in order for our Fourier transform arguments to work), the above results also tell us how to solve our integral equation~\eqref{eq:main} for real functions. This follows from the following remark:

\begin{rem} \label{rem:real-vs-complex}
	Let $E$ be a complex Banach space and suppose that $E$ is a complexification of a real Banach space $E_\R$ (for details about complexifications of Banach spaces, see for instance \cite{Munoz1999}). Let $\Gi: E \to E$ be a bounded linear operator which leaves $E_\R$ invariant and let $\Gi_\R: E_\R \to E_\R$ be the restriction of $T$ to $E_\R$. Then the following assertions hold:
	\begin{enumerate}
		\item[(a)] The operator $\Gi$ is injective if and only if $\Gi_\R$ is injective.
		\item[(b)] Assume that $\Gi$ is injective and let $v,w \in E$ such that $\Gi w = v$. If $v \in E_\R$, then $w \in E_\R$.
	\end{enumerate}
	The proofs are straightforward, so we omit them here.
\end{rem}

Note that the above remark applies in particular to the case where $E$ and $E_\R$ are the complex-valued and the real-valued $L^2$-spaces over $(\R^\times, \lvert x\rvert^c\dx x)$ and where $\Gi$ is the operator defined in Proposition~\ref{prop:operator-on-L_2}. Hence, Corollaries~\ref{cor:injectivit-and-surjectivity-of-G} and~\ref{cor:criterion-for-solvability} and Remark~\ref{rem:holomorphic-case} also give criteria for the solvability of our integral equation in the real case.


\section{Simulation study}\label{sec:simulation_study}
In this section, we provide numerical results for the pure jump infinitely divisible random fields given in Examples~\ref{ex:pure_jump_1}
and~\ref{ex:pure_jump_2}. In both examples, we used the method proposed in Remark~\ref{rem:criterion-for-convergence}, (b), in order to find a suitable value for $a_n$.
Unfortunately, the upper bound $e_n$ for the mean square error of $\widehat{uv_1}$ depends on the unknown function $uv_1$.
Nevertheless, in both examples, for some constant $C = C(uv_1, \Delta, d) > 0$, we have $\err_1(n) \leq C n^{-\frac{a}{2a + 1}} =: e_n$, $n \in \N$. On the other hand,
the number $\alpha_2$ can be chosen arbitrarily large in both cases; set $\alpha_2 = 2 \alpha_1$. 
Now, in order to assess the constant $C$, consider $k \in \N$ independent copies $\widetilde{uv_0}^{(1)}, \dots, \widetilde{uv_0}^{(k)}$ of 
$\widetilde{uv_0}$ and set
\begin{equation}\label{eq:fit_C}
C_k = n^{\frac{a}{2a + 1}} \Big( \argmin\limits_{a_n} \frac{1}{k} \sum_{j=1}^k \|\widetilde{uv_0}^{(j)} - uv_0\|_{L^2(\R^\times, \dx x)} \Big)^{2}.
\end{equation}
The $\argmin$ in~\eqref{eq:fit_C} is taken over all $a_n$ within the interval 
\begin{equation*}
\begin{cases}
[\min\limits_{x} |\mu_f(x)|, \max\limits_{x} |\mu_f(x)|], & \text{if} \ \min\limits_{x} |\mu_f(x)| > 0, \\
(0 , \max\limits_{x} |\mu_f(x)|], & \text{if} \ \min\limits_{x} |\mu_f(x)| = 0.
\end{cases}
\end{equation*}
A simple calculation shows that $L(a_n) := \|\widetilde{uv_0} - uv_0\|_{L^2(\R^\times, \dx x)}$ is a continuous function w.r.t.\ the parameter
$a_n$. Moreover, $\lim_{s \downarrow 0} L(s) > 0$ (possibly infinite), if $\min_{x} |\mu_f(x)| = 0$ whereas 
$\lim_{s \downarrow 0} L(s) = L(0)$ is finite in case that $\min_{x} |\mu_f(x)| > 0$. Since $L(s) = 0$ for any $s \geq \max_{x} |\mu_f(x)|$, 
$C_k$ thus is well-defined. Note that $\max_{x} |\mu_f(x)|$ is always finite due to integrability 
property~\eqref{eq:integrability-condition-stochastic-part}. Now, since $a_n = e_n^{1/2}$, $C = n^{\frac{a}{2a+1}} e_n$
and $e_n \geq \Big( 1+ \frac{L}{2\sqrt{\pi}} \Big)^{-2} \err_0(n)$ (cf. the proof of Corollary~\ref{cor:criterion-for-convergence}),
we set $L=0$ and thus $a_n = C_k^{1/2} \cdot n^{-\frac{a}{4a + 2}}$
for all $n \in \N$. \\ 

Due to high computation time (cf. Remark~\ref{rem:comp_time}), we used $k=10$ in our examples. For the 
parameter $l$ in~\eqref{eq:uv_1_estimator_pure_jump} we follow the recommendation in~\cite{ KarRothSpoWalk19} and use the values $l=1,2,3$.

\begin{rem}
	In order to determine $C_k$, the above method requires the a-priori knowledge of $uv_0$. This allows us to
	test our method and to find dimension dependent constants $C_k$ and $a_n$ which can be used in similar computations where $uv_0$ is not explicitly known. In the variety of test computations we performed, $a_n$ lies in the range from $0.5$ to $1.5$. Moreover, the unknown $uv_0$ can be also estimated via bootstrap out of the available data in order to assess the quantity $\err_0(n)$. 
\end{rem}

The computations in the following sections were performed on a CPU Intel Xeon E5-2630v3, 2.4 GHz with 128 GB RAM.


\subsection{Numerical results for Example~\ref{ex:pure_jump_1}}
Suppose $\theta = 4$ and let $(Y_j = X(j))_{j \in \{ -50, -49, \dots, 49 \}}$ 
be a sample drawn from $X$ (with $\Delta = 1$ and $n=100$). Since $uv_1 \in H^a(\R)$ for any $0 < a < 1$ one can fix e.g.
$a = 1/2$. Then, using $k=10$ in~\eqref{eq:fit_C} we obtain $C_k = 0.8$; hence, $a_n = 0.5$. Table~\ref{table:result_proc} shows mean and standard
deviation of the mean square errors of our estimates based on $100$ simulations for different values of $l$. The results for $l=2,3$ are quite 
similar and significantly better than for $l=1$. Comparing computation times (cf. Table~\ref{table:result_proc}) we therefore prefer to choose $l=2$. Figure~\ref{fig:traject_vs_est_proc} shows a trajectory of the process $X$ and the corresponding estimators $\widehat{uv_0}$ and $\widetilde{uv_0}$ 
with $l=2$.

\begin{table}[h]
	\centering
	\begin{tabular}{@{}rrccc@{}} \toprule
		&  & $l=1$  & $l=2$ & $l=3$  \\ \midrule
		\multirow{2}{1cm}{$\widehat{u v_0}$} & mean & 0.0408022914 & 0.0292780409 & 0.0242192599   \\
		& sd & 0.0077291047 & 0.0077990935 & 0.0071211386 \\ \midrule
		\multirow{2}{1cm}{$\widetilde{u v_0}$} & mean & 0.0195522346 & 0.0116813060 & 0.0094711975   \\
		& sd & 0.0059640590 & 0.0054764933 & 0.0047269966 \\ \midrule \midrule
		\multirow{2}{1cm}{comp. times} & mean & 51.27 & 63.97 & 66.96 \\
		& sd & 2.428181 & 2.952332 & 22.39901
		\\ \bottomrule
	\end{tabular}
	\vspace{0.2cm}
	\caption{Empirical mean and standard deviation of the mean square errors and the computation times (in seconds) of estimates 
		$\widehat{uv_0}$ and $\widetilde{uv_0}$ based on $100$ simulations ($d=1$).}
	\label{table:result_proc}
\end{table}

\begin{figure} [h!]
	\centering 
	\subfigure[trajectory of $X$]
	{\includegraphics[width=0.48\textwidth]{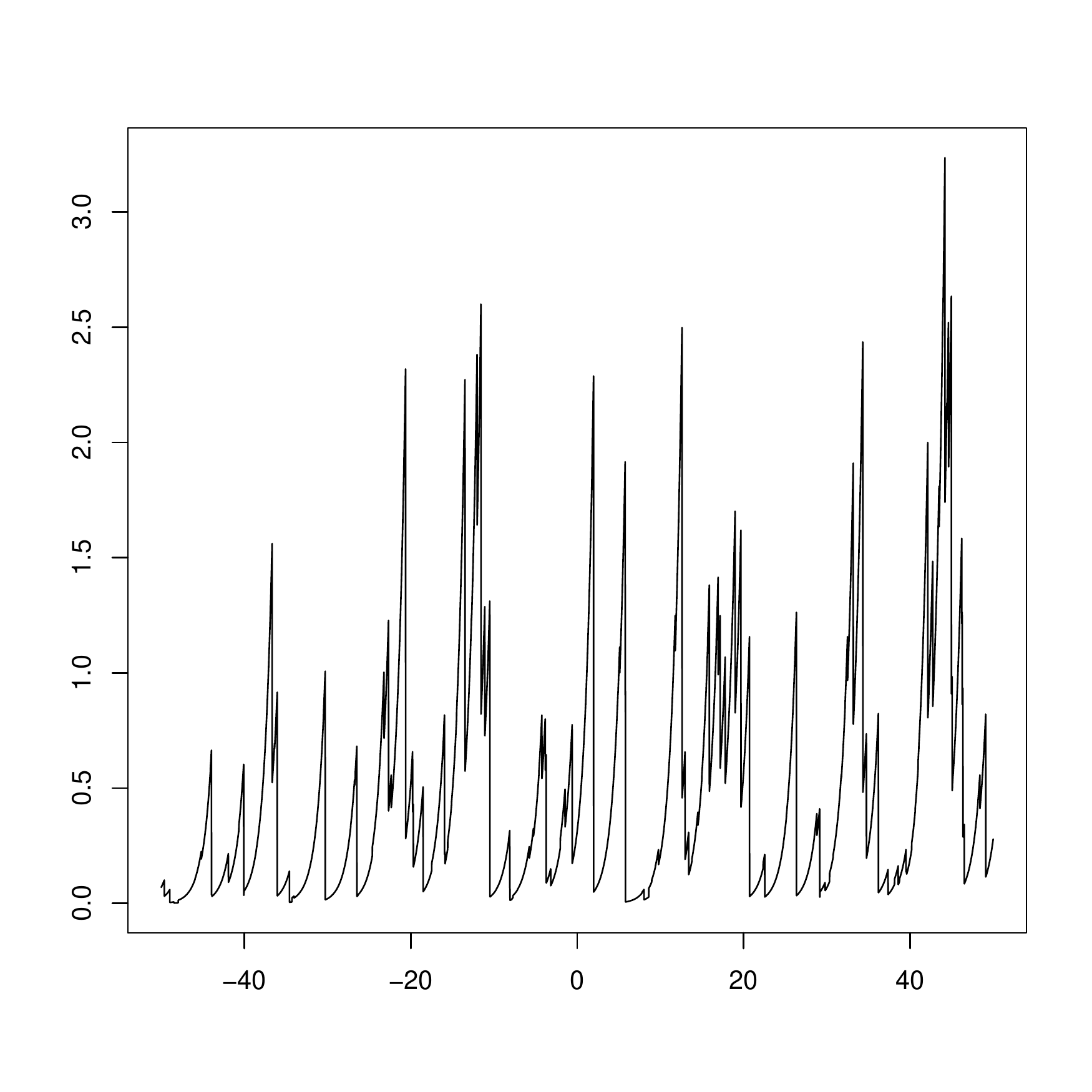} } 	
	\hfill
	\subfigure[$\widehat{u v_0}$ vs. true $u v_0$ (dashed line)]
	{\includegraphics[width=0.48\textwidth]{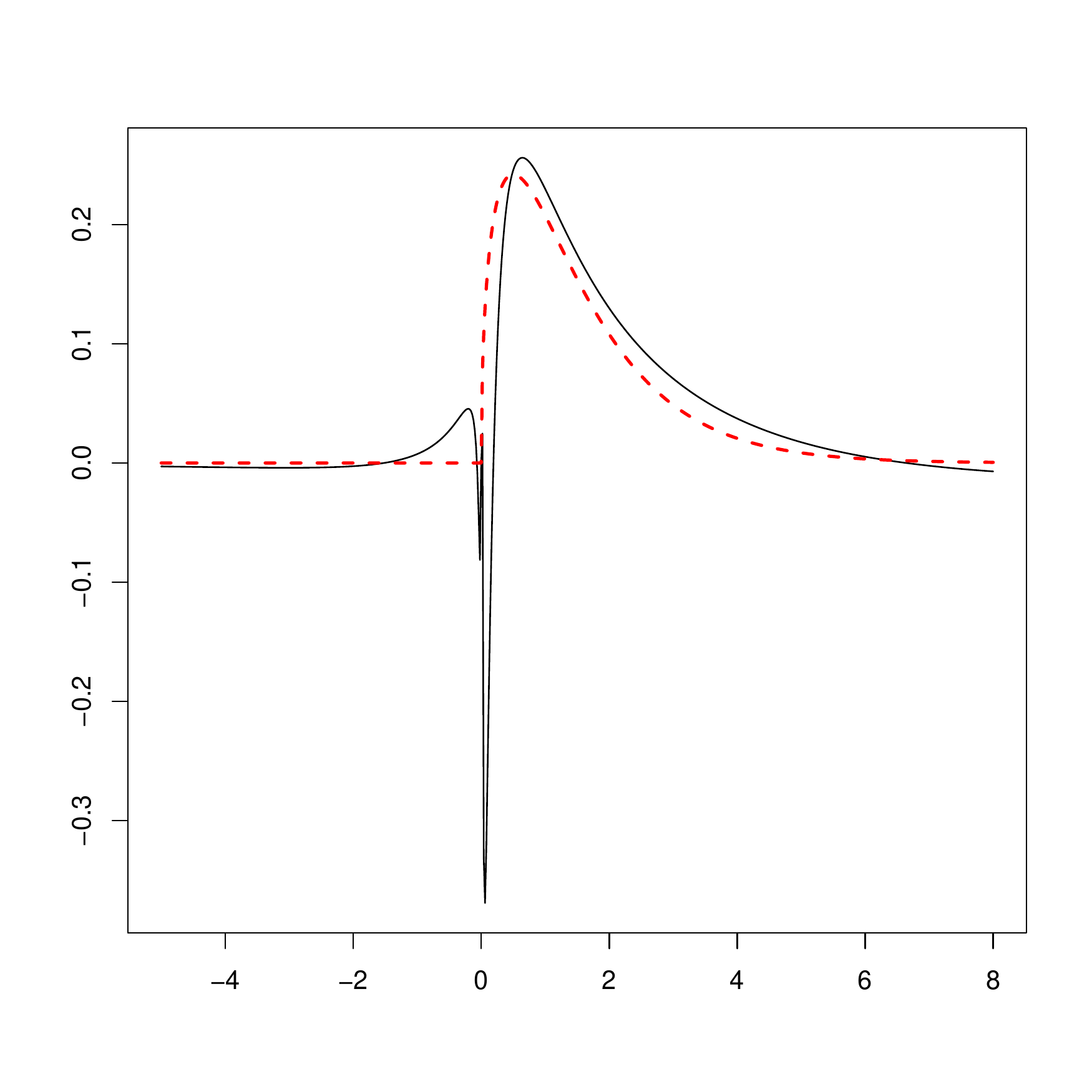} }
	\subfigure[$\widetilde{u v_0}$ vs. true $u v_0$ (dashed line)]
	{\includegraphics[width=0.48\textwidth]{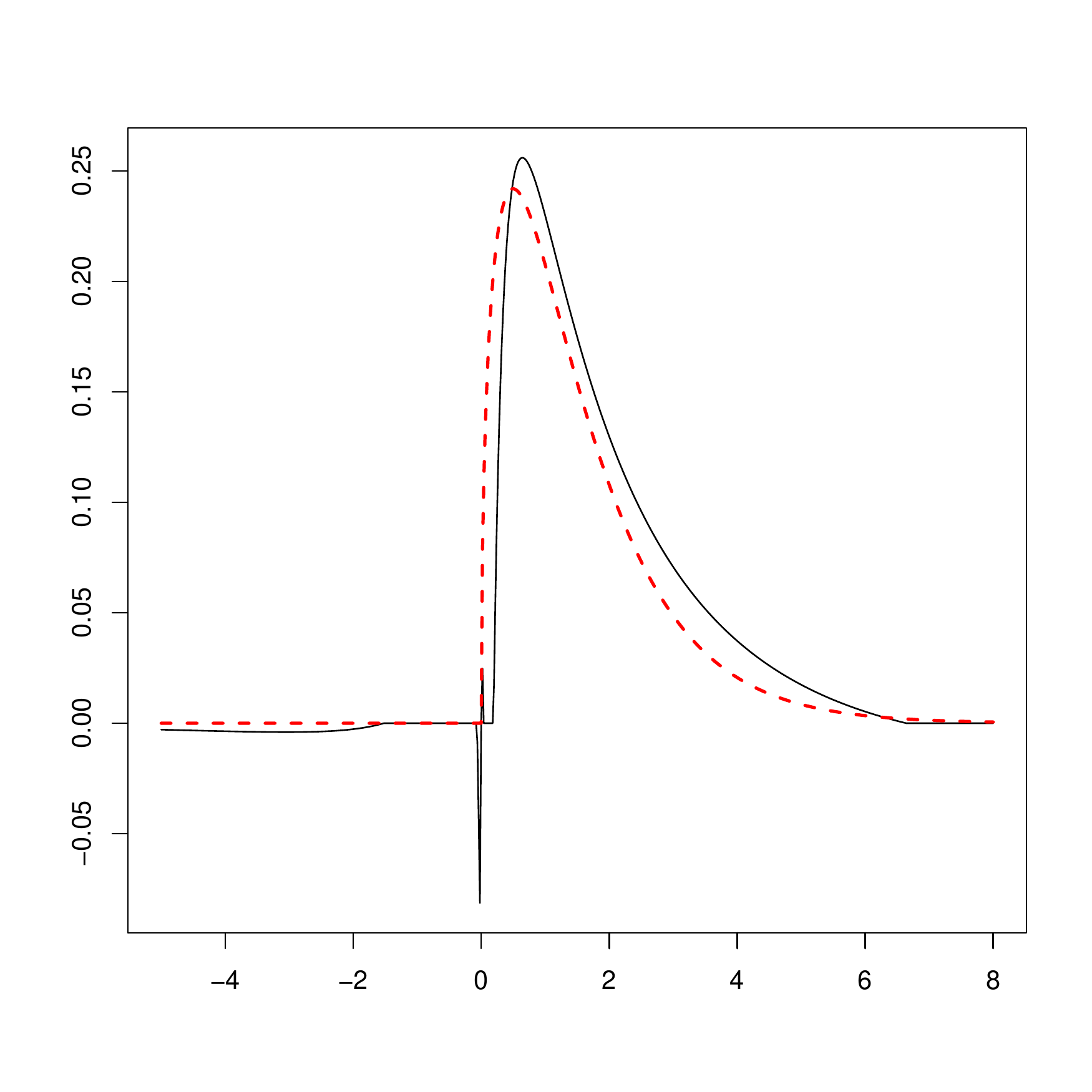} }
	\caption[]{Trajectory of the process $X(t)=\int_{t-\theta}^{t} e^{x-t} \Lambda(dx)$, $t \in \R$ from Example~\ref{ex:pure_jump_1} and the
		corresponding estimators $\widehat{u v_0}$ and $\widetilde{u v_0}$ with $a_n = 0.5$ and $l=2$, compared to the original
		$(uv_0)(x) = ( \frac{x}{\pi} )^{1/2} e^{-x} \one_{(0,\infty)}(x)$.} 
	\label{fig:traject_vs_est_proc}
\end{figure}

\subsection{Numerical results for Example~\ref{ex:pure_jump_2}}

Suppose $\kappa = 1$ and $\tau = 1/2$. Moreover, for $\Delta = 0.1$, let $(Y_j = X(\Delta j))_{j \in \{ -50, -49, \dots, 49 \}^2}$ 
be a sample drawn from $X$ (i.e. $n=10000$). Since $uv_1 \in H^a(\R)$ for any $0 < a < 1/2$ one can fix e.g.
$a = 1/4$. As in the previous example taking $k=10$ in~\eqref{eq:fit_C} leads to $C_k = 4.74$; consequently, $a_n = 1.01$. Mean and standard
deviation of the mean square errors of our estimates based on $100$ simulations for $l \in \{1,2,3\}$ are shown in Table~\ref{table:result_field}. 
Again, the mean square error for $l=1$ differs significantly 
from the mean square errors for $l=2,3$ whereas the values for $l=2,3$ are quite similar. For this reason, we prefer to use
$l=2$ due to shorter computation time (cf. Table~\ref{table:result_field}). Figure~\ref{fig:traject_vs_est_field} finally shows a trajectory of 
the field $X$ and the corresponding estimators $\widehat{uv_0}$ and $\widetilde{uv_0}$ with $l=2$.

\begin{figure} [h!]
	\centering 
	\subfigure[trajectory of $X$]
	{\includegraphics[width=0.48\textwidth]{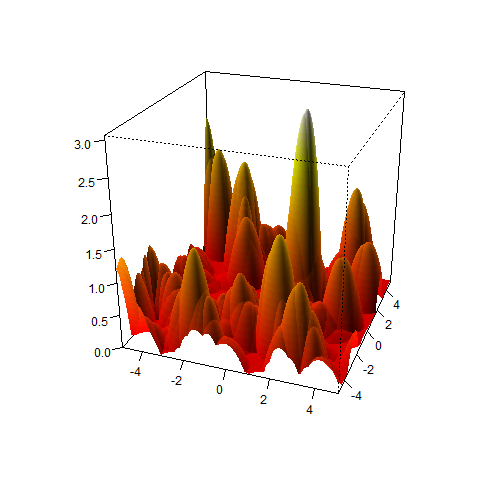} } 	
	\hfill
	\subfigure[$\widehat{u v_0}$ vs. true $u v_0$ (dashed line)]
	{\includegraphics[width=0.48\textwidth]{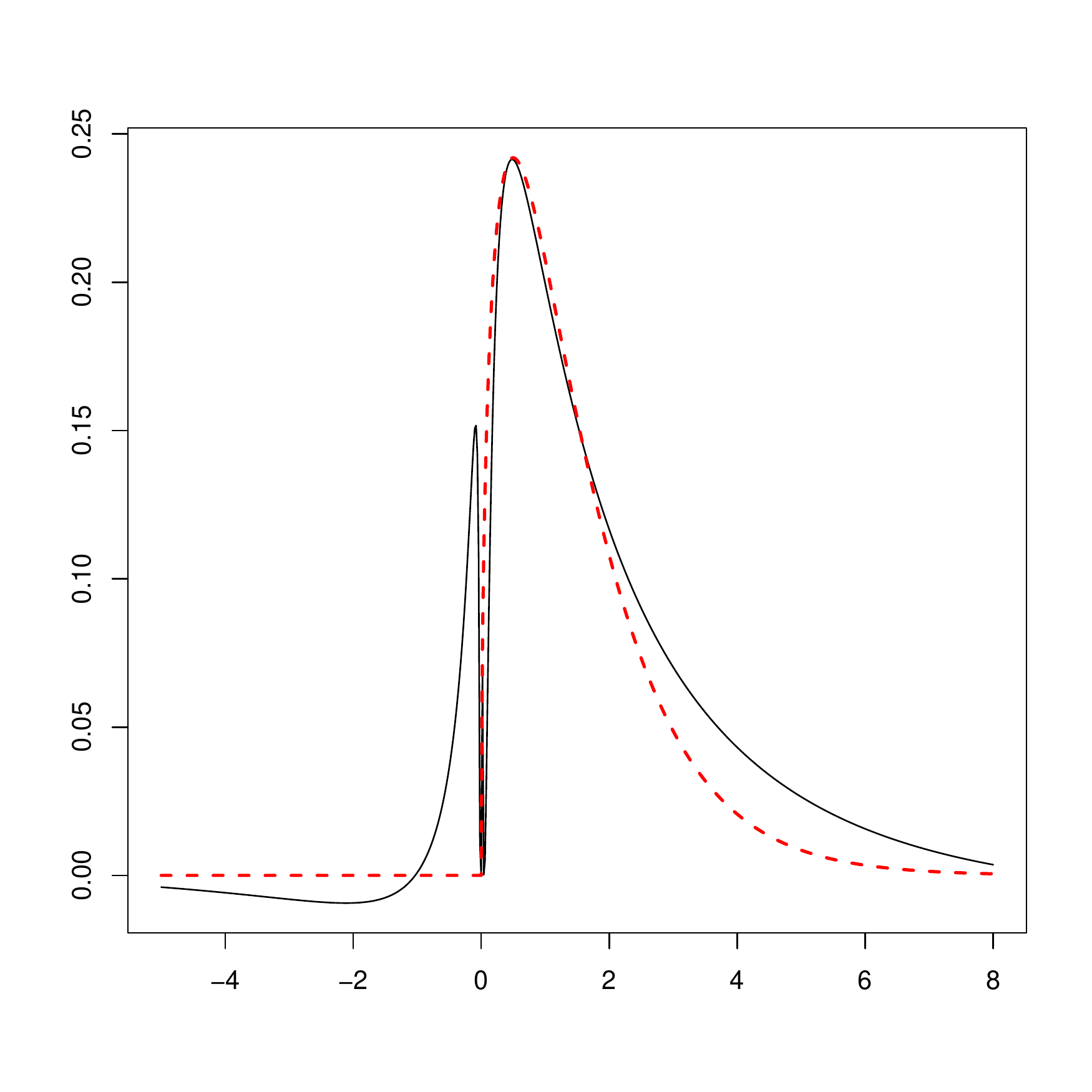} }
	\subfigure[$\widetilde{u v_0}$ vs. true $u v_0$ (dashed line)]
	{\includegraphics[width=0.48\textwidth]{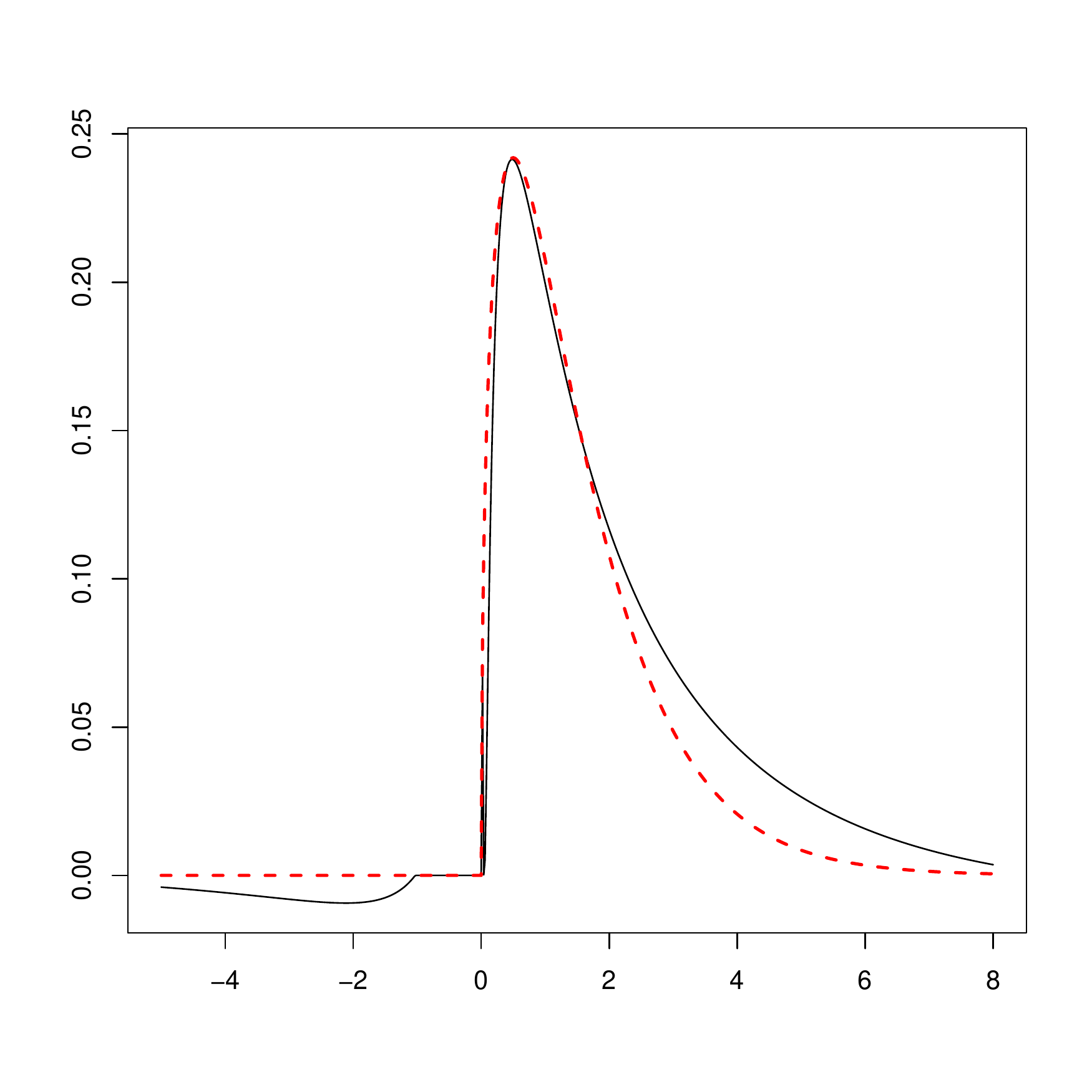} }
	\caption[]{Trajectory of the field $ X(t)=\int_{\|x-t\|_2 \leq \kappa} \frac{1}{2} (1 - \left\| x-t \right\|_2^2) \Lambda(dx)$, $t \in \R^2$ 
		from Example~\ref{ex:pure_jump_2} and the corresponding estimators $\widehat{u v_0}$ and $\widetilde{u v_0}$ with $a_n = 1.01$ and $l=2$, 
		compared to the original $(uv_0)(x) = ( \frac{x}{\pi} )^{1/2} e^{-x} \one_{(0,\infty)}(x)$.} 
	\label{fig:traject_vs_est_field}
\end{figure}

\begin{table}[h]
	\centering
	\begin{tabular}{@{}rrccc@{}} \toprule
		&  & $l=1$  & $l=2$ & $l=3$ \\  \midrule
		\multirow{2}{1cm}{$\widehat{u v_0}$} & mean & 0.0224887614 & 0.0113368827 & 0.0081844574   \\
		& sd & 0.0025617440 & 0.0024961707 & 0.0022603961 \\ \midrule
		\multirow{2}{1cm}{$\widetilde{u v_0}$} & mean & 0.0149609313 & 0.0068290548 & 0.0053098816   \\
		& sd & 0.0040406427 & 0.0030365831 & 0.0024361438 \\ \midrule \midrule
		\multirow{2}{1cm}{comp. times} & mean & 1003.71 & 3062.4 & 3834.87 \\
		& sd & 76.7873 & 211.6424 & 561.4358
		\\ \bottomrule
	\end{tabular}
	\vspace{0.2cm}
	\caption{Empirical mean and standard deviation of the mean square errors and the computation times (in seconds) of estimates 
		$\widehat{uv_0}$ and $\widetilde{uv_0}$ based on $100$ simulations ($d=2$).}
	\label{table:result_field}
\end{table}


%

\clearpage

\nocite{LevMattersIV,BelomGoldenschl19,BelomOrlPan19,BelomSchoenm16}

\bibliographystyle{plain} 

\end{document}